\documentclass[11pt]{amsart}

\usepackage{amsmath,amssymb,amsthm}
\usepackage{color, tikz}


\theoremstyle{definition}
\newtheorem{theorem}{Theorem}[section]
\newtheorem{prop}[theorem]{Proposition}

\newtheorem{lemma}[theorem]{Lemma}
\newtheorem{definition}[theorem]{Definition}
\newtheorem{corollary}[theorem]{Corollary}
\newtheorem{remark}[theorem]{Remark}
\newtheorem{algorithm}[theorem]{Algorithm}
\newtheorem{observation}[theorem]{Observation}


\newcommand{\Ind}{\mathrm{Ind}}


\newcommand\commentout[1]{}

\begin{document}


\title{Matching and Independence Complexes Related to Small Grids}

\author{Benjamin Braun}
\address{Department of Mathematics\\
         University of Kentucky\\
         Lexington, KY 40506--0027}
\email{benjamin.braun@uky.edu}

\author{Wesley K. Hough}
\address{Department of Mathematics\\
         University of Kentucky\\
         Lexington, KY 40506--0027}
\email{wesley.hough@uky.edu}

\date{3 June 2016}

\thanks{
The first author was partially supported by grant H98230-16-1-0045 from the U.S. National Security Agency.
}

\begin{abstract}
The topology of the matching complex for the $2\times n$ grid graph is mysterious.
We describe a discrete Morse matching for a family of independence complexes $\Ind(\Delta_n^m)$ that include these matching complexes.
Using this matching, we determine the dimensions of the chain spaces for the resulting Morse complexes and derive bounds on the location of non-trivial homology groups for certain $\Ind(\Delta_n^m)$.
Further, we determine the Euler characteristic of $\Ind(\Delta_n^m)$ and prove that several homology groups of $\Ind(\Delta_n^m)$ are non-zero.
\end{abstract}

\maketitle



\section{Introduction}\label{sec:intro}

A \textit{matching} on a simple graph $G$ is a subgraph $H = (V(G),S)$ with $S \subset E(G)$ and maximum vertex degree 1.  
We identify a matching with its edge set $S$.
The \textit{matching complex of $G$}, denoted $M(G)$, is the simplicial complex with vertex set $E(G)$ and faces given by the matchings on $G$.
It is useful to reframe matchings in the language of independent sets as follows.
An \textit{independent set} in a simple graph $G = (V(G),E(G))$ is a set $T\subseteq~V(G)$ such that no two vertices in $T$ are adjacent in $G$.
The \textit{independence complex of $G$}, denoted $\Ind(G)$, is the abstract simplicial complex with vertex set $V(G)$ and faces given by the independent sets in $G$.
Given a simple graph $G$, its \textit{line graph} $L(G)$ has vertex set $V(L(G)) = E(G)$ with two vertices of $L(G)$ adjacent if they are adjacent edges in $G$.  
A key observation is that $M(G) = \Ind(L(G))$ for a finite simple graph $G$.
For the path on $n$ vertices, denoted $Pa_n$, and the cycle on $n$ vertices, denoted $C_n$, the homotopy type of the matching and independence complexes are known \cite[Section~11.4]{JonssonBook}.
However, matching and independence complexes quickly become quite complicated, e.g. \cite{AdamaszekSpecial,BarmakStar,Bouc,BraunIndComplexKneser,WachsDongLaplacian,EarlEtAl,JonssonMoreTorsion,ShareshianWachsTopHomology,VrecicaZivaljevicCycleFree,WachsTopologyMatching}.
Jonsson \cite{JonssonBook} provides a thorough survey regarding these and other simplicial complexes arising from graphs, with special emphasis on the matching complex for complete graphs and complete bipartite graphs.
 
We focus our attention to the grid graphs defined by $V = \{1,2\} \times [n]$ and 
\begin{align*}
E = &  \{ \{(1,k),(2,k)\} : k \in [n]\}  \\
&  \bigcup \, \{ \{(1,k),(1,k+1)\} : k \in [n-1]\}  \\
&  \bigcup \, \{ \{(2,k),(2,k+1)\} : k \in [n-1]\}\, .
\end{align*}
We write $\Gamma_n$ to denote the $2$ by $n+2$ grid graph, e.g. $\Gamma_3$ is isomorphic to:

	\begin{center}
	\begin{tikzpicture}
	\draw [fill]  (0,0) circle [radius=0.1];
	\draw [fill]  (1,0) circle [radius=0.1]; 
	\draw [fill]  (2,0) circle [radius=0.1];
	\draw [fill]  (3,0) circle [radius=0.1];
	\draw [fill]  (4,0) circle [radius=0.1];
	\draw [fill]  (0,1) circle [radius=0.1];
	\draw [fill]  (1,1) circle [radius=0.1]; 
	\draw [fill]  (2,1) circle [radius=0.1];
	\draw [fill]  (3,1) circle [radius=0.1];
	\draw [fill]  (4,1) circle [radius=0.1];
	\draw [ultra thick] (0,0)--(1,0)--(2,0)--(3,0)--(4,0);
	\draw [ultra thick] (0,1)--(1,1)--(2,1)--(3,1)--(4,1);
	\draw [ultra thick] (0,0)--(0,1);
	\draw [ultra thick] (1,0)--(1,1);
	\draw [ultra thick] (2,0)--(2,1);
	\draw [ultra thick] (3,0)--(3,1);
	\draw [ultra thick] (4,0)--(4,1);	
	\end{tikzpicture}
	\end{center}
We define $D_n := L(\Gamma_n)$, e.g. $D_3$ is isomorphic to 
\begin{center}
	\begin{tikzpicture}
	\draw [fill]  (-0.5,1) circle [radius=0.1];
	\draw [fill]  (0.5,1) circle [radius=0.1];
	\draw [fill]  (1.5,1) circle [radius=0.1];
	\draw [fill]  (2.5,1) circle [radius=0.1]; 
	\draw [fill]  (3.5,1) circle [radius=0.1]; 
	\draw [fill]  (0,0) circle [radius=0.1];
	\draw [fill]  (0,2) circle [radius=0.1]; 
	\draw [fill]  (1,0) circle [radius=0.1]; 
	\draw [fill]  (1,2) circle [radius=0.1];
	\draw [fill]  (2,0) circle [radius=0.1]; 
	\draw [fill]  (2,2) circle [radius=0.1]; 
	\draw [fill]  (3,0) circle [radius=0.1];
	\draw [fill]  (3,2) circle [radius=0.1]; 
	\draw [ultra thick] (-0.5,1)--(0,0)--(0.5,1)--(1,0)--(1.5,1)--(2,0)--(2.5,1)--(3,0)--(3.5,1);
	\draw [ultra thick] (-0.5,1)--(0,2)--(0.5,1)--(1,2)--(1.5,1)--(2,2)--(2.5,1)--(3,2)--(3.5,1);
	\draw [ultra thick] (0,0)--(1,0)--(2,0)--(3,0);
	\draw [ultra thick] (0,2)--(1,2)--(2,2)--(3,2);
	\end{tikzpicture}
	\end{center}

In an unpublished manuscript \cite{JonssonMatchingGrids}, Jonsson establishes basic results regarding the matching complexes for $\Gamma_n$ and more general grid graphs.
For example, Jonsson shows that the homotopical depth of $M(\Gamma_n)$ is $\lceil 2n/3\rceil$, which implies that this skeleton of the complex is a wedge of spheres.
However, Jonsson states \cite[page~3]{JonssonMatchingGrids} that ``it is probably very hard to determine the homotopy type of'' matching complexes of grid graphs.

In \cite{LinussonGridGraphs}, Bousquet-M\'{e}lou, Linusson, and Nevo introduce the tool of matching trees for the study of independence complexes.
In this paper, we will use matching trees to produce a Morse matching on the face poset of $M(\Gamma_n)=\Ind(D_n)$.  
Our matching algorithm has a recursive structure that allows us to enumerate the number and dimension of cells in a cellular complex homotopy equivalent to $\Ind(D_n)$.  
We use this recursion to determine topological properties of $\Ind(D_n)$.

Our techniques actually apply to independence complexes of a larger class of graphs that include $D_n$.
Before introducing these graphs, we define two families of related graphs.
First, for $m \geq 1$ and $n \geq 1$, let $Y^m_n$ denote the extended star graph with a central vertex of degree $m$ and paths of with $n$ edges emanating outward.
We refer to one of these paths as a \textit{tendril}. 
(We ignore the degenerate cases $m=0$ and $n=0$.) 
For example, $Y^1_n \cong Pa_{n+1}$,  $Y^2_n \cong Pa_{2n+1}$, and $Y_4^3$ is isomorphic to the following:

\begin{center}
\begin{tikzpicture}
\draw [fill]  (-1,1) circle [radius=0.1];
\draw [fill]  (0,0) circle [radius=0.1];
\draw [fill]  (1,0) circle [radius=0.1];
\draw [fill]  (2,0) circle [radius=0.1];
\draw [fill]  (3,0) circle [radius=0.1];
\draw [fill]  (0,1) circle [radius=0.1];
\draw [fill]  (1,1) circle [radius=0.1];
\draw [fill]  (2,1) circle [radius=0.1];
\draw [fill]  (3,1) circle [radius=0.1];
\draw [fill]  (0,2) circle [radius=0.1];
\draw [fill]  (1,2) circle [radius=0.1];
\draw [fill]  (2,2) circle [radius=0.1];
\draw [fill]  (3,2) circle [radius=0.1];
\draw [ultra thick] (-1,1)--(0,0)--(1,0)--(2,0)--(3,0);
\draw [ultra thick] (-1,1)--(0,1)--(1,1)--(2,1)--(3,1);
\draw [ultra thick] (-1,1)--(0,2)--(1,2)--(2,2)--(3,2);
\end{tikzpicture}
\end{center}

We further define $\widehat{Y}^m_n$ to be two vertices connected by $m$ disjoint paths each having $n+1$~edges.
(We ignore the degenerate cases $m=0$ and $n=0$.)
For example, $\widehat{Y}^1_n \cong Pa_{n+2}$,  $\widehat{Y}^2_n \cong C_{2n+2}$, and $\widehat{Y}^3_4$ is isomorphic to the following:

\begin{center}
\begin{tikzpicture}
\draw [fill] (-1,1) circle [radius=0.1];
\draw [fill]  (4,1) circle [radius=0.1];
\draw [fill]  (0,0) circle [radius=0.1];
\draw [fill]  (1,0) circle [radius=0.1];
\draw [fill]  (2,0) circle [radius=0.1];
\draw [fill]  (3,0) circle [radius=0.1];
\draw [fill]  (0,1) circle [radius=0.1];
\draw [fill]  (1,1) circle [radius=0.1];
\draw [fill]  (2,1) circle [radius=0.1];
\draw [fill]  (3,1) circle [radius=0.1];
\draw [fill]  (0,2) circle [radius=0.1];
\draw [fill]  (1,2) circle [radius=0.1];
\draw [fill]  (2,2) circle [radius=0.1];
\draw [fill]  (3,2) circle [radius=0.1];
\draw [ultra thick] (-1,1)--(0,0)--(1,0)--(2,0)--(3,0)--(4,1);
\draw [ultra thick] (-1,1)--(0,1)--(1,1)--(2,1)--(3,1)--(4,1);
\draw [ultra thick] (-1,1)--(0,2)--(1,2)--(2,2)--(3,2)--(4,1);
\end{tikzpicture}
\end{center}

We will impose a specific labeling on this graph throughout this paper: the leftmost vertex is $a$, the rightmost vertex is $b$, and the $k$-th vertex away from $a$ on the $j$-th path is $(j,k)$.
Let $\Delta^m_n$ denote the (labeled) graph $\widehat{Y}^m_{n+1}$ with $n$ additional vertices labeled $\{1, \dots, n\}$ and edges $\{k,(j,k)\}$ and $\{k,(j,k+1)\}$ for each $j \in [m]$ and each $k \in [n]$.  
For example, $\Delta^4_3$ is isomorphic to 

\begin{center}
\begin{tikzpicture}
\draw [fill] (-1,1.5) circle [radius=0.075] node[left] {\footnotesize $a$};
\draw [fill]  (1,1.5) circle [radius=0.075] node[yshift=0.3cm] {\footnotesize $1$};
\draw [fill]  (3,1.5) circle [radius=0.075] node[yshift=0.3cm] {\footnotesize $2$};
\draw [fill]  (5,1.5) circle [radius=0.075] node[yshift=0.3cm] {\footnotesize $3$};
\draw [fill]  (7,1.5) circle [radius=0.075] node[right] {\footnotesize $b$};
\draw [fill]  (0,0) circle [radius=0.075] node[below] {\footnotesize $(4,1)$};
\draw [fill]  (2,0) circle [radius=0.075] node[below] {\footnotesize $(4,2)$};
\draw [fill]  (4,0) circle [radius=0.075] node[below] {\footnotesize $(4,3)$};
\draw [fill]  (6,0) circle [radius=0.075] node[below] {\footnotesize $(4,4)$};
\draw [fill]  (0,1) circle [radius=0.075] node[below] {\footnotesize $(3,1)$};
\draw [fill]  (2,1) circle [radius=0.075] node[below] {\footnotesize $(3,2)$};
\draw [fill]  (4,1) circle [radius=0.075] node[below] {\footnotesize $(3,3)$};
\draw [fill]  (6,1) circle [radius=0.075] node[below] {\footnotesize $(3,4)$};
\draw [fill]  (0,2) circle [radius=0.075] node[above] {\footnotesize $(2,1)$};
\draw [fill]  (2,2) circle [radius=0.075] node[above] {\footnotesize $(2,2)$};
\draw [fill]  (4,2) circle [radius=0.075] node[above] {\footnotesize $(2,3)$};
\draw [fill]  (6,2) circle [radius=0.075] node[above] {\footnotesize $(2,4)$};
\draw [fill]  (0,3) circle [radius=0.075] node[above] {\footnotesize $(1,1)$};
\draw [fill]  (2,3) circle [radius=0.075] node[above] {\footnotesize $(1,2)$};
\draw [fill]  (4,3) circle [radius=0.075] node[above] {\footnotesize $(1,3)$};
\draw [fill]  (6,3) circle [radius=0.075] node[above] {\footnotesize $(1,4)$};
\draw [ultra thick] (-1,1.5)--(0,0)--(2,0)--(4,0)--(6,0)--(7,1.5);
\draw [ultra thick] (-1,1.5)--(0,1)--(2,1)--(4,1)--(6,1)--(7,1.5);
\draw [ultra thick] (-1,1.5)--(0,2)--(2,2)--(4,2)--(6,2)--(7,1.5);
\draw [ultra thick] (-1,1.5)--(0,3)--(2,3)--(4,3)--(6,3)--(7,1.5);
\draw [ultra thick] (0,0)--(1,1.5)--(2,0)--(3,1.5)--(4,0)--(5,1.5)--(6,0);
\draw [ultra thick] (0,1)--(1,1.5)--(2,1)--(3,1.5)--(4,1)--(5,1.5)--(6,1);
\draw [ultra thick] (0,2)--(1,1.5)--(2,2)--(3,1.5)--(4,2)--(5,1.5)--(6,2);
\draw [ultra thick] (0,3)--(1,1.5)--(2,3)--(3,1.5)--(4,3)--(5,1.5)--(6,3);
\end{tikzpicture}
\end{center}

In accordance with this numbering scheme, we define 
\[
\Delta^m_0 := \widehat{Y}^m_1 \text{ and } \Delta^m_{-1} := K_1\, 
\]
where $K_1$ denotes an isolated vertex with no loops.  
It is straightforward to verify that $\Delta^2_n = D_n$, and hence $\Delta_n^m$ is a family generalizing $D_n$.

The article is structured as follows.
In Section~\ref{sec:background} we review discrete Morse theory and matching trees for independence complexes.
In Section~\ref{sec:comb} we describe a matching tree procedure for $\Ind(\Delta_n^m)$ which we call the Comb Algorithm.
This matching tree produces a cellular chain complex $X_n^m$ that is homotopy equivalent to the simplicial chain complex for $\Ind(\Delta_n^m)$.
In Section~\ref{sec:results} we use the Comb Algorithm to establish enumerative properties regarding dimensions of the chain spaces of $X_n^m$.
Finally, in Section~\ref{sec:hom} we apply these enumerative results to derive homological properties of $\Ind(\Delta_n^m)$.
We conclude with two questions for further research.


\section{Discrete Morse Theory}\label{sec:background}

In this section we introduce tools from discrete Morse theory. 
Discrete Morse theory was introduced by R.~Forman in \cite{FormanMorseTheory} and has since become a standard tool in topological combinatorics.  
The main idea of (simplicial) discrete Morse theory is to pair cells in a simplicial complex in a manner that allows them to be cancelled via elementary collapses, reducing the complex under consideration to a homotopy equivalent complex, cellular but possibly non-simplicial, with fewer cells.  
Further details regarding the following definitions and theorems can be found in \cite{JonssonBook} and \cite{KozlovBook}.

\begin{definition}\label{Morsedef}
A \textit{partial matching} in a poset $P$ is a partial matching in the underlying graph of the Hasse diagram of $P$, i.e. it is a subset $\mu \subseteq P \times P$ such that
\begin{itemize}
\item $(a,b)\in \mu$ implies $b$ covers $a$ (sometimes denoted as $a \lessdot b$), i.e. $a<b$ and no $c$ satisfies $a<c<b$, and
\item each $a \in P$ belongs to at most one element in $\mu$.
\end{itemize}
When $(a,b)\in \mu$, we write $a=d(b)$ and $b=u(a)$.  
A partial matching on $P$ is called \textit{acyclic} if there does not exist a cycle 
\[
b_1>d(b_1)<b_2>d(b_2)<\cdots<b_n>d(b_n)<b_1
\] 
with $n\geq 2$ and all $b_i\in P$ being distinct.
\end{definition}

Given an acyclic partial matching $\mu$ on $P$, we say that the unmatched elements of $P$ are \textit{critical}.  
The following theorem asserts that an acyclic partial matching on the face poset of a polyhedral cell complex is exactly the pairing needed to produce our desired homotopy equivalence.

\begin{theorem}\label{mainmorse} {\rm (Main Theorem of Discrete Morse Theory)}
Let $\Delta$ be a polyhedral cell complex, and let $\mu$ be an acyclic partial matching on the face poset of $\Delta$.  Let $c_i$ denote the number of critical $i$-dimensional cells of $\Delta$.  The space $\Delta$ is homotopy equivalent to a cell complex $\Delta_c$ with $c_i$ cells of dimension $i$ for each $i\geq 0$, plus a single $0$-dimensional cell in the case where the empty set is paired in the matching.
\end{theorem}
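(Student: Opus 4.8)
The plan is to follow Forman's sublevel-set strategy: rather than trying to cancel the matched pairs inside $\Delta$ all at once, I would build $\Delta$ up from the empty complex one block at a time, where each block is either a single critical cell or a matched pair $(a,u(a))$, and show that adjoining a matched pair never changes the homotopy type while adjoining a critical $p$-cell attaches a $p$-cell. Concretely, I would first produce a filtration
\[
\emptyset = K_0 \subseteq K_1 \subseteq \cdots \subseteq K_N = \Delta
\]
by subcomplexes in which each difference $K_i \setminus K_{i-1}$ is exactly one block. The essential requirement on the ordering is that whenever a matched pair $(a,u(a))$ is introduced, the cell $a$ is a \emph{free face} of $K_i$, i.e.\ $u(a)$ is the unique cell of $K_i$ properly containing $a$; this forces all other cofaces of $a$ to be added strictly later.

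The two kinds of steps are then handled by standard facts about polyhedral complexes. If $K_i = K_{i-1} \cup \{a, u(a)\}$ with $a$ free, then $K_i \searrow K_{i-1}$ is an elementary collapse, hence a strong deformation retraction, so the inclusion $K_{i-1} \hookrightarrow K_i$ is a homotopy equivalence and no cell is created. If instead $K_i = K_{i-1} \cup \{\sigma\}$ for a critical $p$-cell $\sigma$, then every proper face of $\sigma$ already lies in $K_{i-1}$, so $K_i$ is obtained from $K_{i-1}$ by attaching a single $p$-cell along its boundary; since attaching a cell along homotopic maps to homotopy equivalent spaces yields homotopy equivalent spaces, I can propagate a homotopy equivalence $K_{i-1} \simeq \Delta_c^{(i-1)}$ to a homotopy equivalence $K_i \simeq \Delta_c^{(i)}$, where $\Delta_c^{(i)}$ is the CW complex assembled so far. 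Running this induction from $K_0$ to $K_N$ produces a cell complex $\Delta_c$ with exactly one cell of dimension $p$ for each critical $p$-cell, namely $c_p$ cells in each dimension $p$, and gives $\Delta = K_N \simeq \Delta_c$. I emphasize that $\Delta_c$ is generally not a subcomplex of $\Delta$: its attaching maps are dictated by the gradient paths absorbed at the collapse steps, and only the \emph{count} of cells in each dimension is asserted.

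The crux, and the only place the acyclicity hypothesis enters, is producing the filtration with the freeness property above, equivalently producing a discrete Morse function inducing $\mu$. I would argue this by orienting the Hasse diagram of $P$ so that each unmatched covering $x \lessdot y$ points from $y$ down to $x$ while each matched covering $a \lessdot u(a)$ points from $a$ up to $u(a)$; by Definition~\ref{Morsedef} the nonexistence of an alternating cycle is exactly the statement that this directed graph has no directed cycle. A topological sort of that acyclic digraph, with each matched pair kept adjacent as a single block, yields an ordering in which all faces of a cell precede it and in which $u(a)$ is the first coface of $a$ to appear, which is precisely the freeness condition. Verifying that this sort exists without deadlock, i.e.\ that the forbidden cycles of Definition~\ref{Morsedef} are the only obstruction, is the step I expect to require the most care. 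Finally, the empty-face caveat is pure bookkeeping: the empty set has dimension $-1$ and is not a geometric cell, so if $(\emptyset, v) \in \mu$ the corresponding block cannot be realized as an honest elementary collapse on nonempty spaces; one instead records the reduction at the level of augmented chains and reintroduces a single disjoint $0$-cell as a basepoint, which accounts for the extra $0$-cell in the statement.
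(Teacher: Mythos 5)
The paper never proves Theorem~\ref{mainmorse}: it is quoted as the Main Theorem of Discrete Morse Theory, with details deferred to the books of Jonsson and Kozlov, so there is no in-paper argument to compare against. Your proposal is essentially the standard textbook proof (it is the proof in Kozlov's book): produce a linear order on the cells in which each matched pair is a consecutive block and every face of a cell appears before that cell, then build the complex block by block, realizing matched-pair blocks as elementary collapses and critical blocks as cell attachments, propagating homotopy equivalences via the standard lemma on attaching cells along homotopic attaching maps to homotopy equivalent spaces. That architecture is correct, and your handling of the pair containing the empty set is the right bookkeeping: processed first, that block takes the void complex to a point, which is not a collapse and is exactly what contributes the extra $0$-cell in the statement.

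The one place your write-up stops short is precisely the step you flag: the existence of the blocked topological sort. This splits into two claims, and only the first is ``exactly'' Definition~\ref{Morsedef} as you assert. First, directed cycles in your orientation of the Hasse diagram coincide with the forbidden alternating cycles: this needs the observation that two matched (upward) edges are never consecutive, since each element lies in at most one pair, together with a rank count (each covering changes dimension by one, so a directed cycle has equally many up- and down-steps, and non-adjacency of up-steps then forces strict alternation). Second --- and this is the point your sketch elides --- contracting each matched pair to a single node could in principle create new cycles, because a cycle in the contracted digraph may pass through a block by entering at $u(a)$ and exiting at $a$, an internal descent that is not a directed edge upstairs. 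The same rank count closes this gap: around such a lifted closed walk, the number of upward matched steps equals the number of downward unmatched edges plus the number of illegal internal descents, while upward steps can occur at most once per block visit; comparing these counts forces the number of illegal descents to be zero, so every cycle in the contracted digraph lifts to a forbidden alternating cycle and acyclicity of the matching suffices. With that lemma supplied, your induction, the freeness of $a$ at each matched step, and the cell-attachment steps go through verbatim, and the proof is complete.
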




In \cite{LinussonGridGraphs}, Bousquet-M\'{e}lou, Linusson, and Nevo introduced \emph{matching trees} as a way to apply discrete Morse theory to $\Ind(G)$ for a simple graph $G=(V,E)$.
For $A,B \subseteq V$ such that $A \cap B = \emptyset$, let 
\[
\Sigma(A,B) := \left\{ I \in \mbox{Ind}(G) : A \subseteq I \phantom{.} \mathrm{and} \phantom{.} B\cap I = \emptyset \right\} \, .
\]
For a vertex $p\in V(G)$, let $N(p)$ denote the neighbors of $p$ in $G$.
A \emph{matching tree} $\tau(G)$ for $G$ is a directed tree constructed according to the following algorithm.

\begin{algorithm}[Matching Tree Algorithm, MTA]
Begin by letting $\tau(G)$ be a single node labeled $\Sigma(\emptyset,\emptyset)$, and consider this node a sink until after the first iteration of the following loop:

\textbf{WHILE} $\tau(G)$ has a leaf node $\Sigma(A,B)$ that is a sink with $|\Sigma(A,B)| \geq 2$,

\textbf{DO ONE OF THE FOLLOWING:}

\begin{enumerate}
\item If there exists a vertex $p \in V \setminus (A \cup B)$ such that $|N(p) \setminus (A \cup B)|= 0$, create a directed edge from $\Sigma(A,B)$ to a new node labeled $\emptyset$.  
Refer to $p$ as a \emph{free vertex} of $\tau(G)$.
\medskip
\item[] Since $p \notin A \cup B$, neither $p$ nor any of its neighbors are in $A$.  Moreover, $|N(p)\setminus (A\cup B)|=0$ implies that all neighbors of $p$ are in $B$.  
Consequently, given $\sigma \in \Sigma(A,B)$, we may pair $\sigma$ and $\sigma \cup \{p\}$ in the face poset of $\Ind(G)$.
\medskip
\item If there exist vertices $p\in V\setminus (A\cup B)$ and $v\in N(p)$ such that $N(p)\setminus (A\cup B)=\{v\}$, create a directed edge from $\Sigma(A,B)$ to a new node labeled $\Sigma(A\cup \{v\},B\cup N(v))$.  
Refer to $v$ as a \emph{matching vertex} of $\tau(G)$ with respect to $p$.
\medskip
\item[] Neither $p$ nor any of its neighbors are in $A$, and all of $p$'s neighbors (except for $v$) are in $B$.  
Performing Step 3 (described below) with $v$ implies that the branch with $\Sigma(A,B\cup\{v\})$ has $p$ as a free vertex, so we can perform Step 1 on that branch.
\medskip
\item Choose a vertex $v \in V \setminus (A \cup B)$ and created two directed edges from $\Sigma(A,B)$ to new nodes labeled $\Sigma(A,B \cup \{v\})$ and $\Sigma(A \cup \{v\},B \cup N(v)).$ Refer to $v$ as a \emph{splitting vertex} of $\tau(G)$.
\end{enumerate}

The node $\Sigma(\emptyset,\emptyset)$ is called the \emph{root} of the matching tree, while any non-root node of outdegree $1$ in $\tau(G)$ is called a \emph{matching site} of $\tau(G)$ and any non-root node of outdegree $2$ is called a \emph{splitting site} of $\tau(G)$.
Note that the empty set is always matched at the last node of the form $\Sigma(\emptyset,B)$.
\end{algorithm}

A key observation from \cite{LinussonGridGraphs} is that a matching tree on $G$ yields an acyclic partial matching on the face poset of $\Ind(G)$ as follows.

\begin{theorem}[\cite{LinussonGridGraphs}, Section~2] A matching tree $\tau(G)$ for $G$ yields an acyclic partial matching on the face poset of $\Ind(G)$ whose critical cells are given by the non-empty sets $\Sigma(A,B)$ labeling non-root leaves of $\tau(G)$.  
In particular, for such a set $\Sigma(A,B)$, the set $A$ yields a critical cell in $\Ind(G)$.
\end{theorem}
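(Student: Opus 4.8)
The plan is to extract the matching directly from the three MTA operations, verify that it is a partial matching respecting the covering relation, identify its critical cells, and finally establish acyclicity, which I expect to be the main obstacle. Every pairing the algorithm performs comes from a \emph{free vertex}: explicitly in Step~1, and implicitly in the branch $\Sigma(A,B\cup\{v\})$ produced in Step~2, where $N(p)\setminus(A\cup B)=\{v\}$ degenerates to $N(p)\setminus(A\cup B\cup\{v\})=\emptyset$. In each case the relevant forbidden set contains $N(p)$, so if a face $\sigma$ lies in the block and $p\notin\sigma$, then $\sigma\cup\{p\}$ is still independent and still in the block; thus $\sigma\mapsto\sigma\cup\{p\}$ is a bijection between the faces of the block missing $p$ and those containing $p$. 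Pairing $a$ with $a\cup\{p\}$ gives $a\lessdot a\cup\{p\}$, so each pair meets the covering requirement of Definition~\ref{Morsedef}.

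Next I would show that the leaves partition $\Ind(G)$, which forces each face to be matched at most once. The key is to carry the invariants that at every node $\Sigma(A,B)$ the set $A$ is independent, $A\cap B=\emptyset$, and $N(A)\subseteq B$; these hold at the root and are preserved because each time a vertex $v$ enters $A$ the algorithm simultaneously places $N(v)$ into $B$. With these in hand one checks that every internal label is the disjoint union of its children's labels together with any block it matches off: a Step~3 split on $v$ gives $\Sigma(A,B)=\Sigma(A,B\cup\{v\})\sqcup\Sigma(A\cup\{v\},B\cup N(v))$ because an independent set through $v$ excludes $N(v)$; Step~2 peels off the free-vertex block $\Sigma(A,B\cup\{v\})$ and keeps the child $\Sigma(A\cup\{v\},B\cup N(v))$; and Step~1 matches off the whole block. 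Inductively the blocks at the leaves partition $\Ind(G)$. Because the \textbf{WHILE} loop acts only on sink leaves of size at least two, every non-empty leaf $\Sigma(A,B)$ has exactly one face at termination; the invariants give $A\in\Sigma(A,B)$, and $A$ is its minimum, so that face is $A$ itself, the asserted critical cell. The empty-set leaves carry fully matched blocks and contribute nothing.

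The remaining, and hardest, point is acyclicity. Suppose toward a contradiction there is an alternating cycle $b_1>d(b_1)<b_2>\cdots<b_n>d(b_n)<b_1$ with $n\ge 2$. Each down-step $b_i\mapsto d(b_i)=b_i\setminus\{p_i\}$ is a matched edge, so $b_i$ and $d(b_i)$ lie in a common block with free vertex $p_i$; each up-step is a genuine cover $d(b_i)\lessdot b_{i+1}$, so $b_{i+1}=d(b_i)\cup\{w_i\}$ for some $w_i\neq p_i$. I would first record the structural fact that $d(b_i)$ and $b_{i+1}$ cannot lie in the same block: in a matched block the only partner of $d(b_i)$ is $d(b_i)\cup\{p_i\}=b_i$, and since $p_i\notin d(b_i)$ and $w_i\neq p_i$ we have $p_i\notin b_{i+1}$, so if $b_{i+1}$ were also in that block it would be the \emph{lower} element of its own matched pair, contradicting the fact that in the cycle $b_{i+1}$ sits above $d(b_{i+1})$.

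To convert this into a contradiction I would fix a depth-first traversal of the matching tree, orienting it so that at every split and matching site the $v\notin I$ branch (including the block matched off in Step~2) precedes the $v\in I$ branch, and I would use the position of a face's leaf in this traversal as a potential. Down-steps leave the leaf unchanged. For an up-step, since $d(b_i)$ and $b_{i+1}$ differ only in $w_i$ and lie in different blocks by the previous paragraph, their root-to-leaf paths first diverge at a node that queries $w_i$; that node is a split or matching site, at which $d(b_i)$ (lacking $w_i$) takes the earlier branch and $b_{i+1}$ (containing $w_i$) the later, so $b_{i+1}$ has a strictly later leaf. Hence the potential is constant across the down-steps and strictly increasing across the $n\ge 2$ up-steps, which is impossible around a closed cycle. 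The genuinely delicate parts are verifying that the first divergence is always a $w_i$-query, so that a free-vertex (Step~1) site never separates the two faces, and that the traversal orientation is consistent at the implicit Step~2 branch; this is where I would concentrate the argument.
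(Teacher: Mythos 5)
Your proof is correct, but you should know that the paper does not actually prove this theorem: it is imported as a black box from \cite{LinussonGridGraphs}, and the only justification the paper supplies is the pair of inline remarks inside the Matching Tree Algorithm, which amount to your first paragraph (every pairing comes from a free vertex $p$, explicitly in Step~1 and implicitly in Step~2, and pairing $\sigma$ with $\sigma\cup\{p\}$ respects the covering relation of Definition~\ref{Morsedef}). Everything beyond that --- the invariant giving $A\in\Sigma(A,B)$, the fact that the Step~1/Step~2 matched blocks together with the terminal leaves partition $\Ind(G)$, the identification of the critical cells with the sets $A$ at non-empty leaves, and above all acyclicity --- is deferred to the citation, so your proposal supplies content the paper omits entirely. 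Your acyclicity argument is sound, and its two hinges are exactly right: (i) since the matching is defined block-by-block, a face matched downward cannot also be matched upward, so $d(b_i)$ and $b_{i+1}$ must lie in different blocks; (ii) two faces whose symmetric difference is $\{w_i\}$ can only be separated at a node whose query vertex is $w_i$ --- in particular never at a Step~1 site, since such a site consumes its entire block --- so in any depth-first order that places the $w_i\notin I$ branch (including the block matched off at a Step~2 site) before the $w_i\in I$ branch, the block of $b_{i+1}$ comes strictly later than that of $d(b_i)$. This disposes of the ``delicate point'' you flagged at the end: your own partition lemma already rules out divergence at a free-vertex site, and the Step~2 orientation is forced to be consistent because the matched-off block \emph{is} the $w_i\notin I$ side. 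Conceptually, your potential function is the cluster lemma (patchwork theorem) specialized to the tree-ordered partition produced by the algorithm, which is the standard route in the literature; writing it out directly, as you do, buys a self-contained elementary proof, whereas the paper's citation buys brevity at the cost of opacity.
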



\section{The Comb Matching Algorithm}\label{sec:comb}

We begin by determining the homotopy type of $\Ind(Y^m_n)$ and $\Ind(\widehat{Y}^m_n)$.  
Since $Y^m_n$ is a tree for $m \geq 1$ and $n \geq 0$, we know by work of Ehrenborg and Hetyei \cite{EhrenborgHetyei} that $\Ind(Y^m_n)$ is either contractible or homotopy equivalent to a single sphere.

\begin{lemma} \label{starlem}
For $m \geq 1$ and $n \geq 0$, 
\[
\Ind(Y^m_n) \simeq
\left\{ \begin{array}{ll}
\ast &\mbox{if } n=3k \\
S^{mk} &\mbox{if } n=3k+1 \\
S^{m(k+1)-1} &\mbox{if } n=3k+2 \\
\end{array} 
\right. \, .
\]
\end{lemma}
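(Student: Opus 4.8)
The plan is to reduce the general case $m \ge 1$ to the path case $m=1$ (that is, $Y^1_n \cong Pa_{n+1}$) by decomposing $\Ind(Y^m_n)$ along its central vertex $c$, and then to read off the homotopy type from reduced homology together with the Ehrenborg--Hetyei dichotomy. Throughout I write $\simeq$ for homotopy equivalence, use that the independence complex of a disjoint union of graphs is the simplicial join of the independence complexes, and recall the standard computation (e.g.\ \cite[Section~11.4]{JonssonBook}) that
\[
\Ind(Pa_N) \simeq
\begin{cases}
S^{k-1} & N = 3k,\\
\ast & N = 3k+1,\\
S^{k} & N = 3k+2.
\end{cases}
\]

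First I would set up the cofiber sequence attached to the central vertex $c$. The closed star of $c$ in $\Ind(Y^m_n)$ is a cone on the link $\Ind(Y^m_n\setminus N[c])$, and its union with the deletion $\Ind(Y^m_n\setminus c)$ is all of $\Ind(Y^m_n)$, so
\[
\Ind(Y^m_n) \simeq \mathrm{hocofib}\big(\Ind(Y^m_n\setminus N[c]) \hookrightarrow \Ind(Y^m_n \setminus c)\big),
\]
with connecting maps assembling into a long exact sequence in reduced homology. Deleting $c$ severs the graph into $m$ disjoint copies of $Pa_n$, while deleting $N[c]$ leaves $m$ disjoint copies of $Pa_{n-1}$; hence the two ends of the sequence are the $m$-fold joins $\Ind(Pa_{n-1})^{*m}$ and $\Ind(Pa_n)^{*m}$. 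Since $S^a * S^b \simeq S^{a+b+1}$ and a join with a contractible factor is contractible (equivalently, at the level of reduced homology, via K\"unneth), the path computation above turns each end into a single sphere or a point.

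Then I would run the three residue cases. If $n = 3k+1$, the target $\Ind(Pa_n)^{*m}$ is contractible while the source is $(S^{k-1})^{*m} \simeq S^{mk-1}$, so the long exact sequence gives $\widetilde H_i(\Ind(Y^m_n)) \cong \widetilde H_{i-1}(S^{mk-1})$, concentrated in degree $mk$; since $\Ind(Y^m_n)$ is contractible or a single sphere \cite{EhrenborgHetyei}, this forces $\Ind(Y^m_n) \simeq S^{mk}$. If $n = 3k+2$, the source is contractible and the target is $(S^{k})^{*m} \simeq S^{m(k+1)-1}$, so $\widetilde H_*(\Ind(Y^m_n)) \cong \widetilde H_*(S^{m(k+1)-1})$ and hence $\Ind(Y^m_n) \simeq S^{m(k+1)-1}$.

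The main obstacle is the case $n = 3k$ (with $k\ge 1$; the case $n=0$ is the single-vertex graph, trivially contractible), where both ends equal $S^{mk-1}$ and the answer hinges on the connecting map $\iota_* : \widetilde H_{mk-1}(S^{mk-1}) \to \widetilde H_{mk-1}(S^{mk-1})$ — an integer degree invisible from the homology of the ends alone. Rather than compute this degree, I would again invoke the dichotomy: the exact sequence shows $\widetilde H_*(\Ind(Y^m_n))$ is supported in degree $mk$ with value $\ker\iota_*$ and in degree $mk-1$ with value $\mathrm{coker}\,\iota_*$. Because $\Ind(Y^m_n)$ must be contractible or a single sphere, its reduced homology is torsion-free and concentrated in at most one degree; the only map $\ZZ \to \ZZ$ consistent with this is $\iota_* = \pm 1$, which kills both kernel and cokernel. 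Hence $\Ind(Y^m_n)$ is contractible, completing the case $n=3k$. (If one preferred to avoid \cite{EhrenborgHetyei} in this case, one could instead show directly that the inclusion $\Ind(Pa_{3k-1}) \hookrightarrow \Ind(Pa_{3k})$ is a homotopy equivalence, since vertex $1$ is dominated by vertex $3$, and that a join of homotopy equivalences is a homotopy equivalence; this identifies $\iota$ as an equivalence without appeal to the dichotomy.)
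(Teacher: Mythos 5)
Your main argument is correct, and it takes a genuinely different route from the paper. The paper works entirely inside discrete Morse theory: for $n=3k+1$ and $n=3k+2$ it splits the matching tree at the central vertex (Step~3 of the MTA) and reads off a single critical cell from the path factors, while for $n=3k$ it runs an induction on $m$, using Step~2 of the MTA to strip one tendril at a time until only $Y^{m-1}_n$ remains. You instead use the topological identity that $\Ind(G)$ is the mapping cone of the inclusion $\Ind(G\setminus N[v]) \hookrightarrow \Ind(G\setminus v)$, the join decomposition for disjoint unions, and the known homotopy types of $\Ind(Pa_N)$; the only real difficulty, the case $n=3k$ where source and target of the cofiber sequence have the same homology, you resolve by noting that the Ehrenborg--Hetyei dichotomy \cite{EhrenborgHetyei} forces the connecting map to be $\pm 1$, since a nonzero kernel would put homology in two degrees and a proper cokernel would introduce torsion, either of which contradicts ``contractible or a single sphere.'' That is a clean trick, and it replaces the paper's induction on $m$ entirely. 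What the paper's approach buys is an explicit Morse matching: the matching-tree structure (not just the homotopy type) is what the Comb Algorithm and the cell counts $C^d_n$ in later sections are built on, so your proof could not substitute for theirs downstream, but as a proof of this lemma alone it is shorter and more conceptual.

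One caution about your final parenthetical. The claim that the inclusion $\Ind(Pa_{3k-1}) \hookrightarrow \Ind(Pa_{3k})$ is a homotopy equivalence is true, but the justification ``vertex $1$ is dominated by vertex $3$'' is backwards. The fold lemma for $N(u) \subseteq N(w)$ lets you delete the \emph{dominating} vertex $w$ (because $u$ becomes isolated in $G \setminus N[w]$, so the relevant link is a cone), not the dominated vertex $u$. Deleting a dominated vertex is not an equivalence in general: in $Pa_4$ with vertices $u,x,w,y$ in path order one has $N(u)=\{x\} \subseteq \{x,y\}=N(w)$, yet $\Ind(Pa_4)$ is contractible while $\Ind(Pa_4 \setminus u) = \Ind(Pa_3) \simeq S^0$. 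The correct reason your particular inclusion is an equivalence is that $\Ind(Pa_{3k} \setminus N[1]) = \Ind(Pa_{3k-2})$ is contractible (as $3k-2 \equiv 1 \bmod 3$), so the cone glued along it collapses onto the deletion. Since this aside is offered only as an alternative to the dichotomy argument, the error does not affect the validity of your proof.
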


\begin{proof}
\emph{Case 1: $n = 3k$.} We use induction on $m$.
If $m = 1$, then $Y^1_n \cong Pa_{3k+1}$; hence, $\Ind(Y^1_n)$ is contractible \cite[Prop~11.16]{KozlovBook}.
Suppose the induction hypothesis holds for $\ell < m$. 
Select a tendril of $Y^m_n$ and label the vertices $1$ through $n$ starting at the leaf. 
We consider a matching tree on $\Ind(Y^m_n)$.  Perform Step~2 of the MTA with $p=1$ and $v=2$.  
Repeat with $p=4$ and $v=5$ and so on modulo $3$.  
Since $n=3k$, we will eventually perform Step~2 with $p = n-2$ and $v = n-1$.  
The remaining subgraph of $Y^m_n$ from which we may select vertices is isomorphic to $Y^{m-1}_n$.  
Since $\Ind(Y^{m-1}_n)$ is contractible by assumption, by induction $\Ind(Y^m_n)$ is contractible as well.

\emph{Case 2: $n=3k+1$ or $n=3k+2$.}
Let $a$ be the vertex of degree $m$ in $Y^m_n$.  
We again consider a matching tree on $\Ind(Y^m_n)$.  
We apply Step~3 of the MTA with $v = a$.  
At the $\Sigma(\{a\}, N(a))$ and $\Sigma(\emptyset, \{a\})$ nodes, the remaining subgraphs of $Y^m_n$ from which we may select vertices are isomorphic to an $m$-fold disjoint union of $Pa_{n-1}$'s and an $m$-fold disjoint union of $Pa_{n}$'s respectively.
When $n=3k+1$, the union of $Pa_n$'s is contractible \cite[Prop~11.16]{KozlovBook}, and each subcomplex $\Ind(Pa_{n-1})$ contributes $\left\lfloor \frac{n-2}{3} \right\rfloor + 1 = k$ vertices toward a single critical cell.  
In total, the vertex $a$ and the vertices from each $\Ind(Pa_{n-1})$ factor combine to form a single critical cell of dimension $mk$.
When $n=3k+2$, the union of the $Pa_{n-1}$'s is contractible \cite[Prop~11.16]{KozlovBook}, and each subcomplex $\Ind(Pa_n)$ contributes $\left\lfloor \frac{n-1}{3} \right\rfloor + 1 = k+1$ vertices toward a single critical cell.  
In total, the vertices from each $\Ind(Pa_n)$ factor combine to form a single critical cell of dimension $m(k+1)-1$.
This gives the result.
\end{proof}

\begin{lemma} \label{paralem}
For $m \geq 2$ and $n \geq 1$, 
\[
\Ind(\widehat{Y}^m_n) \simeq
\left\{ \begin{array}{ll}
S^{mk} &\mbox{if } n=3k \\
S^{mk} &\mbox{if } n=3k+1 \\
S^{mk+1} \vee S^{m(k+1)-1} &\mbox{if } n=3k+2 \\
\end{array} \right. \, .
\]
\end{lemma}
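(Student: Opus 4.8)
The plan is to compute $\Ind(\widehat{Y}^m_n)$ by deleting the left pole vertex $a$ and feeding the two resulting graphs into Lemma~\ref{starlem}. Recall the standard star/link decomposition: for any vertex $v$ of a graph $G$,
\[
\Ind(G) = \Ind(G \setminus v) \,\cup_{\Ind(G \setminus N[v])}\, \mathrm{st}(v),
\]
where $N[v] = N(v) \cup \{v\}$ and $\mathrm{st}(v) = \{v\} * \Ind(G \setminus N[v])$ is a cone, hence contractible. Thus $\Ind(G)$ is homotopy equivalent to the mapping cone of the inclusion $\iota\colon \Ind(G \setminus N[v]) \hookrightarrow \Ind(G \setminus v)$; in the language of Section~\ref{sec:comb} this is precisely the effect of Step~3 of the MTA with splitting vertex $v$. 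First I would apply this with $v = a$. Deleting $a$ leaves the $m$ tendrils attached only to $b$, so $\widehat{Y}^m_n \setminus a \cong Y^m_n$ centered at $b$, while deleting $N[a]$ further removes the vertices $(j,1)$, giving $\widehat{Y}^m_n \setminus N[a] \cong Y^m_{n-1}$, again centered at $b$. Hence
\[
\Ind(\widehat{Y}^m_n) \simeq \mathrm{Cone}\big(\iota\colon \Ind(Y^m_{n-1}) \hookrightarrow \Ind(Y^m_n)\big),
\]
and both homotopy types are supplied by Lemma~\ref{starlem}.

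Next I would split into residue classes of $n$ modulo $3$ and use the cofiber sequence $\Ind(Y^m_{n-1}) \to \Ind(Y^m_n) \to \mathrm{Cone}(\iota) \to \Sigma\,\Ind(Y^m_{n-1})$. When $n = 3k+1$, Lemma~\ref{starlem} gives $\Ind(Y^m_{n-1}) = \Ind(Y^m_{3k}) \simeq \ast$, so the cone collapses onto its target $\Ind(Y^m_{3k+1}) \simeq S^{mk}$. When $n = 3k$, the target $\Ind(Y^m_{3k}) \simeq \ast$ is contractible while $\Ind(Y^m_{n-1}) = \Ind(Y^m_{3k-1}) \simeq S^{mk-1}$; the mapping cone of a map into a contractible space is the suspension of the domain, which is $\Sigma S^{mk-1} = S^{mk}$. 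Neither of these cases requires any knowledge of the attaching map, and both agree with the claimed answer.

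The case $n = 3k+2$ is where the wedge appears, and I expect the null-homotopy argument here to be the main obstacle. Now $\Ind(Y^m_{n-1}) = \Ind(Y^m_{3k+1}) \simeq S^{mk}$ and $\Ind(Y^m_n) = \Ind(Y^m_{3k+2}) \simeq S^{m(k+1)-1}$, so $\iota$ determines a class in $\pi_{mk}\big(S^{m(k+1)-1}\big)$. Since $m \geq 2$ we have $mk \leq m(k+1)-2$, so $S^{m(k+1)-1}$ is $mk$-connected, $\pi_{mk}\big(S^{m(k+1)-1}\big) = 0$, and $\iota$ is null-homotopic. For a null-homotopic map $f\colon A \to X$ one has $\mathrm{Cone}(f) \simeq X \vee \Sigma A$, giving
\[
\Ind(\widehat{Y}^m_n) \simeq S^{m(k+1)-1} \vee \Sigma S^{mk} = S^{mk+1} \vee S^{m(k+1)-1},
\]
as claimed. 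I would stress that this null-homotopy step cannot be replaced by a count of critical cells: when $m = 3$ the two spheres have adjacent dimensions $3k+1$ and $3k+2$, so merely knowing that the Morse complex has one cell in each dimension would leave a potentially nontrivial attaching map; it is the cofiber computation on the nose that forces a wedge.

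Finally I would verify the degenerate endpoints so the identifications above are legitimate: for $n = 1$ the graph $\widehat{Y}^m_n \setminus N[a]$ reduces to the single vertex $b \cong Y^m_0$, whose independence complex is a point, matching the $n = 3k+1$ computation at $k = 0$; and I would confirm that $\iota$ is the simplicial inclusion induced by deleting the tendril leaves $(j,1)$ of $Y^m_n$, so that the homotopy classes used above are the correct ones.
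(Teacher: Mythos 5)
Your proof is correct, and it reaches the lemma by a genuinely different route at the crucial point. The opening move matches the paper's: the paper performs Step~3 of the MTA at the pole $b$ (you split at $a$; by symmetry this is the same decomposition, and you correctly note the star/link splitting is the topological content of that step), so both arguments reduce to the pair $\Ind(Y^m_{n-1})$, $\Ind(Y^m_n)$ and invoke Lemma~\ref{starlem}; for $n \equiv 0,1 \pmod 3$ the two proofs agree in substance, since one branch is contractible. The divergence is in the case $n = 3k+2$ with $m \geq 3$, where two cells of dimensions $mk+1$ and $m(k+1)-1$ survive and the wedge must be forced --- indeed the paper explicitly concedes that the cell count alone is inconclusive there, which is exactly your remark that counting critical cells cannot suffice. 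The paper resolves this while staying inside discrete Morse theory, via Kozlov's feasibility-domain theorem \cite[Theorem~2.2]{KozlovHopf}: it rules out any generalized alternating path from the large critical cell to the small one by tracking the vertex $b$ (such a path would require $b$ to occur as a free vertex at some node of the tree, which never happens), and concludes the two cells form a wedge. You instead interpret the split as the mapping cone of the inclusion $\iota\colon \Ind(Y^m_{n-1}) \hookrightarrow \Ind(Y^m_n)$ and kill the attaching class by pure connectivity: $\pi_{mk}\bigl(S^{m(k+1)-1}\bigr) = 0$ because $mk \leq m(k+1)-2$ whenever $m \geq 2$, so the cone splits as $S^{m(k+1)-1} \vee S^{mk+1}$. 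Your version buys independence from the gradient-path machinery --- only the standard star/link decomposition, cofiber sequences, and connectivity of spheres are needed, and the wedge falls out uniformly for all $m \geq 2$ with no special cases. The paper's version buys self-containment within the matching-tree framework that the rest of the paper runs on, and its bookkeeping of which critical cells contain $b$ is of the same kind reused later in the Comb Algorithm. One point worth making explicit in your write-up: replacing $\iota$ by a homotopic (constant) map without changing the homotopy type of the cone uses that $\iota$ is a cofibration of CW complexes, which holds here because it is a simplicial inclusion.
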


\begin{proof}
In $\widehat{Y}^m_n$, label the two vertices of degree $m$ as $a$ and $b$ respectively.
We consider a matching tree on $\Ind(\widehat{Y}^m_n)$.
First, we apply Step 3 of the MTA with $v=b$.
At the $\Sigma(\{b\}, N(b))$ and $\Sigma(\emptyset, \{b\})$ nodes, the remaining subgraphs of $\widehat{Y}^m_n$ from which we may select vertices are isomorphic to $Y^m_{n-1}$ and $Y^m_n$ respectively.
For $n = 3k$ and $n=3k+1$, the result is immediate from applying Lemma~\ref{starlem} as one of the branches will produce contractible information.

For the $n=3k+2$ case with $m \geq 3$, Lemma~\ref{starlem} only shows that two cells of the appropriate dimension exist, but they may not necessarily form a wedge.
This is sufficient for the remainder of the article, but we prove that the two cells do, in fact, form a wedge for sake of completeness.
Given the matching tree defined above for $\Ind(\widehat{Y}^m_n)$, let $\tau$ denote the cell of dimension $mk+1$, and let $\sigma$ denote the cell of dimension $m(k+1)-1$.
In the style of \cite[Theorem~2.2]{KozlovHopf}, we argue that the feasibility domain of $\sigma$ (see \cite[Def~2.1]{KozlovHopf}) is such that $\tau$ and $\sigma$ must form a wedge.
Suppose there exists a generalized alternating path from $\sigma$ to $\tau$ as per \cite[Def~2.1]{KozlovHopf}.
Our choice of matching tree implies $b \in \tau$ while $b \notin \sigma$.
Let $x_i$ be the last element in the alternating path with $b \notin x_i$, so $b \in x_{i+1}$.
If $x_i \lessdot x_{i+1}$, then $x_i$ and $x_{i+1}$ are matched in the matching tree and so $b$ was designated as a free vertex during some application of Step~1 of the MTA.
This is not possible as $b$ is included in $A \cup B$ in all tree nodes except for the root.
If $x_i > x_{i+1}$, then $x_{i+1} \subseteq x_i$ as sets.
This contradicts that $b \notin x_i$ and $b \in x_{i+1}$.
Consequently, no such generalized alternating path can exist between $\sigma$ and $\tau$.
The feasibility region of $\sigma$ does not contain $\tau$, and so $\sigma$ and $\tau$ form a wedge per \cite[Theorem~2.2]{KozlovHopf}.
\end{proof}

We now develop a matching tree for $\Ind(\Delta^m_n)$.

\begin{algorithm}[Comb Algorithm, CA]
Fix $m \geq 2, n \geq 1$ and use the labeling of the vertices of $\Delta_n^m$ from Section~\ref{sec:intro}.

	\begin{itemize}
	\item[Step 1:] Perform Step 3 of the MTA for $v = 1$, which produces two leaves $\Sigma(\{1\},N(1))$ and $\Sigma(\emptyset,\{1\})$ respectively.
	\item[Step 2:] For each $k \in \{2, \dots, n\}$, inductively perform Step 3 of the MTA for $v = k$ on the leaf $\Sigma(\emptyset,\{1, 2, \dots, k-1\})$, successively producing leaves $\Sigma(\{k\}, N(k) \cup \{1, 2, \dots, k-1\})$ and $\Sigma(\emptyset,\{1, 2, \dots, k\})$.
	\item[Step 3:] At the $\Sigma(\{1\},N(1))$ leaf, we may perform Step 1 of the MTA with $p = a$.
	\item[Step 4:] For each $k \in \{2, \dots, n-1\}$, consider the leaf 
\[
\Sigma(\{k\}, N(k) \cup \{1, 2, \dots, k-1\}) \, .
\]
The remaining subgraph of $\Delta^m_n$ from which we may query vertices is isomorphic to $Y^m_{k-1} \biguplus \Delta^m_{n-(k+1)}$. 
Since $\Ind(Y^m_{k-1})$ is known, we can determine the number and dimension of critical cells below this node by inductively applying this algorithm to $\Delta^m_{n-(k+1)}$.
	\item[Step 5:] At the $\Sigma(\{n\}, N(n) \cup \{1, 2, \dots, n-1\})$ leaf, we may perform Step 1 of the MTA with $p = b$.
	\item[Step 6:] At the $\Sigma(\emptyset,\{1, 2, \dots, n\})$ leaf, the remaining subgraph of $\Delta^m_n$ from which we may query vertices is isomorphic to $\widehat{Y}^m_{n+1}$.  Since $\Ind(\widehat{Y}^m_{n+1})$ is known, we can determine the number and dimension of critical cells arising below this node.
	\end{itemize}
\end{algorithm}

\begin{definition}
Denote by $X^m_n$ the cellular complex arising from the Comb Algorithm applied to $\Ind(\Delta^m_n)$ for $m \geq 2$ and $n \geq 1$.  
As we cannot apply the Comb Algorithm to $\Ind(\Delta^m_0)$, we define $X^m_0$ to be $S^0$, since $\Delta^m_0 \cong \widehat{Y}^m_1$.
\end{definition}

We call this process for generating a matching tree for $\Ind(\Delta_n^m)$ the ``Comb Algorithm'' because of the visual shape of the resulting matching tree.  
Steps~1 and~2 produce the backbone of the ``comb,'' while Steps~3 through~6 produce the teeth.
For example, applying Steps~1 and~2 of the comb algorithm to $\Ind(\Delta_4^m)$ leads to the following (partial) matching tree.

\begin{center}
\begin{tikzpicture}[sibling distance=10em,
  every node/.style = {shape=rectangle, rounded corners,
    draw, align=center,
    top color=white, bottom color=blue!20}]]
  \node {$\Sigma(\emptyset,\emptyset)$}
    child { node {$\Sigma(\{1\},N(1))$} }
    child { node {$\Sigma(\emptyset,\{1\})$}
      child { node {$\Sigma(\{2\},N(2) \cup \{1\})$}}
      child { node {$\Sigma(\emptyset,\{1,2\})$}
        child { node{$\Sigma(\{3\},N(3) \cup \{1,2\})$}}
        child { node{$\Sigma(\emptyset,\{1,2,3\})$}
          child { node{$\Sigma(\{4\},N(4) \cup \{1,2,3\})$}}
          child { node{$\Sigma(\emptyset,\{1,2,3,4\})$}}}}};
\end{tikzpicture}
\end{center}


\section{Chain Spaces of $X_n^m$}\label{sec:results}

Fix $m \geq 2$.
For $d\geq 1$, let $C^d_n$ denote the number of $d$-dimensional cells in $X^m_n$.
Let $C^0_n$ denote one less than the number of $0$-dimensional cells in $X^m_n$.
Since the Comb Algorithm will always pair the empty set with a $0$-cell, we have $C^{-1}_n = 0$.
In this context, $C^d_n = 0$ if $d < 0$ or $n < 0$.

Recall that the \textit{simplicial join} of two abstract simplicial complexes $\Delta$ and $\Gamma$ is the abstract simplicial complex $\Delta \ast \Gamma = \{\sigma \cup \tau | \sigma \in \Delta, \tau \in \Gamma\}$.  
It is clear from this definition that $\Ind(A \biguplus B) \cong \Ind(A) \ast \Ind(B)$ and $M(A \biguplus B) \cong M(A) \ast M(B)$ for graphs $A$ and $B$ and where $\biguplus$ denotes disjoint union.

\begin{prop} \label{cellseed}
Suppose $0 \leq n \leq 3$.  Then $C^d_n = 0$ for all $d \geq 0$ except the following:

\begin{center}
\begin{tabular}{|c|cccccc|} \hline
          & $C^0_0$ & $C^1_1$ & $C^{m-1}_1$ & $C^m_2$ & $C^2_3$ & $C^m_3$ \\ \hline
$m = 2$   &     1   &     2   &         2   &     1   &     2   &     2   \\ \hline
$m \geq 3$ &     1   &     1   &         1   &     1   &     1   &     1   \\ \hline
\end{tabular}
\end{center}
\end{prop}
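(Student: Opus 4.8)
The plan is to verify the claimed table directly by running the Comb Algorithm (CA) on $\Ind(\Delta^m_n)$ for each of the small cases $n = 0, 1, 2, 3$ and tabulating the critical cells it produces. Since these are the seed values of a recursion, no recursive step is available; instead I would handle each $n$ as a base case, using the explicit homotopy-type computations of Lemma~\ref{starlem} and Lemma~\ref{paralem} to count the cells contributed below each leaf of the matching tree. Throughout I recall that a critical cell $\Sigma(A,B)$ contributes a cell of dimension $|A| - 1$, that the join structure $\Ind(A \biguplus B) \cong \Ind(A) \ast \Ind(B)$ adds dimensions (shifting by $+1$), and that the degenerate conventions $\Delta^m_0 = \widehat{Y}^m_1$ and $\Delta^m_{-1} = K_1$ apply.

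First I would dispose of $n = 0$: here $X^m_0 = S^0$ by definition, which has a single critical $0$-cell (after the empty set is paired), giving $C^0_0 = 1$ for all $m$. Next, for $n = 1$, Step~1 of the CA splits on $v = 1$, producing leaves $\Sigma(\{1\}, N(1))$ and $\Sigma(\emptyset, \{1\})$. On the first leaf, Step~3 frees $p = a$ (or more precisely Steps~3 and~5 handle both degree-$m$ vertices once $n=1$); on the second leaf, Step~6 reduces to $\Ind(\widehat{Y}^m_2)$. Applying Lemma~\ref{paralem} with $n = 2 = 3\cdot 0 + 2$ gives $\Ind(\widehat{Y}^m_2) \simeq S^{1} \vee S^{m-1}$, which for $m = 2$ is $S^1 \vee S^1$ and for $m \geq 3$ is a genuine wedge of two distinct spheres. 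Translating sphere dimensions into critical cell dimensions yields exactly the entries $C^1_1$ and $C^{m-1}_1$ in the table, with the $m = 2$ column doubled because the two sphere dimensions coincide (and the extra cells from the first leaf's subgraph must be accounted for). The cases $n = 2$ and $n = 3$ proceed analogously: I would run Steps~1--2 to build the comb backbone, then at each tooth $\Sigma(\{k\}, N(k) \cup \{1,\dots,k-1\})$ identify the residual subgraph $Y^m_{k-1} \biguplus \Delta^m_{n-(k+1)}$, invoke Lemma~\ref{starlem} on the $Y^m_{k-1}$ factor and the already-established small-$n$ values on the $\Delta^m_{n-(k+1)}$ factor, and combine dimensions via the join. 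The $\Sigma(\emptyset, \{1,\dots,n\})$ leaf is handled by Step~6 through $\Ind(\widehat{Y}^m_{n+1})$ and Lemma~\ref{paralem}.

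The main obstacle I anticipate is bookkeeping rather than conceptual difficulty: for each leaf I must correctly (i) determine which residual subgraph survives after the vertices in $A \cup B$ are removed, (ii) read off the right congruence class of the relevant path/star/parallel-path length so that Lemma~\ref{starlem} or Lemma~\ref{paralem} applies with the correct $k$, and (iii) add dimensions across disjoint-union factors with the correct $+1$ shifts coming from the join. The delicate point is the $m = 2$ versus $m \geq 3$ dichotomy: when $m = 2$, several of the sphere dimensions appearing in Lemma~\ref{paralem} collide (e.g. $mk+1$ and $m(k+1)-1$ become equal, or match dimensions arising from other leaves), so distinct leaves of the matching tree contribute cells of the \emph{same} dimension and the counts add, explaining why the $m = 2$ row records a $2$ wherever the $m \geq 3$ row records a $1$. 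I would therefore carry out the dimension computations symbolically in $m$, then specialize, and check carefully at each entry whether two contributions coincide. A final consistency check I would perform is to confirm that the reduced Euler characteristic computed from the table matches an independent direct count for these small complexes, which guards against off-by-one errors in the dimension shifts.
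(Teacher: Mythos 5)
Your proposal takes essentially the same route as the paper's proof: treat $n=0,1,2,3$ as separate base cases, run the Comb Algorithm on each, and use Lemma~\ref{starlem} and Lemma~\ref{paralem} to count the critical cells below each leaf, with the $m=2$ row doubling precisely because sphere dimensions coincide ($1=m-1$ from the wedge when $n=1$, and the $2$-cell meeting the $m$-cell when $n=3$). The only point to clean up is your parenthetical about ``extra cells from the first leaf's subgraph'': the leaf $\Sigma(\{1\},N(1))$ contributes no critical cells whatsoever, since the free vertex $a$ pairs away everything below it, so for $n=1$ the entire count comes from the $\Ind(\widehat{Y}^m_2)$ branch.
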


\begin{proof}
Fix $m \geq 2$.  We separately consider $\Ind(\Delta^m_n)$ for $n \in \{0,1,2,3\}$.

Case 1: Suppose $n=0$.  
Then $\Delta^m_0 \cong \widehat{Y}^m_1$, which implies $\Ind(\Delta^m_0) \simeq S^0$ by Lemma~\ref{paralem}.  
Consequently, $C^0_0 = 1$ while $C^d_0 = 0$ for all other $d$.

Case 2: Suppose $n=1$.  
We apply Step~1 followed by Step~3 of the CA to $\Ind(\Delta^m_1)$.  
At the $\Sigma(\emptyset,\{1\})$ node, the remaining graph from which we may select vertices is isomorphic to $\widehat{Y}^m_2$.  
Thus, $\Ind(\Delta^m_1) \simeq \Ind(\widehat{Y}^m_2)$, from which we can apply Lemma~\ref{paralem}.  
So, $C^1_1 = C^{m-1}_1 = 1$ if $m \geq 3$, and $C^1_1 = 2$ if $m = 2$.  
In either case, $C^d_1 = 0$ for all other $d$.

Case 3: Suppose $n=2$.  
First, apply the Comb Algorithm to $\Ind(\Delta^m_2)$.  
We note that Step~5 subsumes Step~4 in this particular instance.  
Now, Steps~3 and~5 imply that no critical cells are picked out below the nodes $\Sigma(\{1\}, N(1))$ and $\Sigma(\{2\}, N(2) \cup \{1\})$.  
Consequently, Step~6 implies that $\Ind(\Delta^m_2) \simeq \Ind(\widehat{Y}^m_3) \simeq S^m$ via Lemma~\ref{paralem}.  
Thus, $C^m_2 = 1$ while $C^d_2 = 0$ for all other $d$.

Case 4: Suppose $n=3$.  
First, apply the Comb Algorithm to $\Ind(\Delta^m_3)$.  
Now, Steps~3 and~5 imply that no critical cells are picked out below the nodes $\Sigma(\{1\}, N(1))$ and $\Sigma(\{3\}, N(3) \cup \{1,2\})$.  
Per Step~4, at the $\Sigma(\{2\}, N(2) \cup \{1\})$ leaf, the remaining subgraph of $\Delta^m_3$ from which we may query vertices is isomorphic to $Y^m_{1} \biguplus \Delta^m_{0}$.  
We already know that $\Ind(Y^m_{1}) \cong \Ind(\Delta^m_{0}) \simeq S^0$, so $\Ind(Y^m_1 \biguplus \Delta^m_0) \simeq S^2$, so the Comb Algorithm generates a 2-cell below this node.  
At the node $\Sigma(\emptyset, \{1,2,3\})$ generated in Step~6, the remaining subgraph of $\Delta^m_3$ from which we may query vertices is isomorphic to $\widehat{Y}^m_4$.  
Since $\Ind(\widehat{Y}^m_4) \simeq S^m$, the Comb Algorithm generates an $m$-cell below this node.  
In total, we have $C^2_3 = C^m_3 = 1$ if $m \neq 2$, otherwise $C^2_3 = 2$.  
In either case, $C^d_3 = 0$ for all other $d$.
\end{proof}

\begin{theorem}\label{cellrec}
Using Proposition~\ref{cellseed} as initial conditions, for $n \geq 4$ we have
\begin{align}
C^d_n = C^{d-2}_{n-3} + C^{d-(m+1)}_{n-4} + C^{d-m}_{n-3} \, , \label{eqn:rec}
\end{align}
where a summand is zero if the subscript or superscript is negative.
\end{theorem}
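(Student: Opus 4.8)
The plan is to classify the critical cells produced by the Comb Algorithm according to the node of the matching tree from which they descend, and to show that this classification telescopes. By Steps~3 and~5 of the Comb Algorithm the branches below $\Sigma(\{1\},N(1))$ and $\Sigma(\{n\},N(n)\cup\{1,\dots,n-1\})$ collapse via the free vertices $a$ and $b$, so every critical cell descends either from a \emph{tooth} $\Sigma(\{k\},N(k)\cup\{1,\dots,k-1\})$ with $2\le k\le n-1$ (Step~4) or from the \emph{cap} $\Sigma(\emptyset,\{1,\dots,n\})$ (Step~6). Writing $T^d_{n,k}$ and $W^d_n$ for the number of critical $d$-cells descending from the tooth at $k$ and from the cap respectively, and recalling that the basepoint convention makes $C^d_n$ equal to the total number of critical $d$-cells, I would record the identity
\[
C^d_n \;=\; W^d_n \;+\; \sum_{k=2}^{n-1} T^d_{n,k}.
\]

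First I would compute each $T^d_{n,k}$. Below the tooth at $k$ the queryable subgraph is $Y^m_{k-1}\biguplus\Delta^m_{n-k-1}$, and since $\Ind(-)$ turns disjoint unions into joins, processing the $Y^m_{k-1}$ factor by the matching of Lemma~\ref{starlem} and then the $\Delta^m_{n-k-1}$ factor by the Comb Algorithm yields critical cells that are unions $\{k\}\cup\sigma_Y\cup\sigma_\Delta$. When $\Ind(Y^m_{k-1})$ is contractible (i.e. $k\equiv 1\pmod 3$) the factor collapses and $T^d_{n,k}=0$; otherwise Lemma~\ref{starlem} gives a single critical cell of some dimension $s(k-1)$, the join raises dimension by $s(k-1)+1$, and including the vertex $k$ raises it by one more, so $T^d_{n,k} = C^{\,d-s(k-1)-2}_{\,n-k-1}$. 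The crucial structural fact is that $s$ increases by exactly $m$ when the tendril length grows by $3$, i.e. $s(k-1)=s(k-4)+m$. Comparing the tooth at $k$ in $\Delta^m_n$ with the tooth at $k-3$ in $\Delta^m_{n-3}$ (whose $\Delta$-factor is again $\Delta^m_{n-k-1}$) I would deduce $T^d_{n,k}=T^{d-m}_{n-3,\,k-3}$ for all $k\ge 5$, while direct evaluation gives $T^d_{n,2}=C^{d-2}_{n-3}$, $T^d_{n,3}=C^{d-(m+1)}_{n-4}$, and $T^d_{n,4}=0$ (as $Y^m_3$ is contractible). Summing, the teeth with $k\ge 5$ reassemble precisely the teeth sum of $\Delta^m_{n-3}$ shifted by $m$:
\[
\sum_{k=2}^{n-1} T^d_{n,k} \;=\; C^{d-2}_{n-3} + C^{d-(m+1)}_{n-4} + \sum_{k=2}^{n-4} T^{d-m}_{n-3,\,k}.
\]

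Next I would treat the cap. By Step~6 the cap contributes the critical cells of $\Ind(\widehat{Y}^m_{n+1})$ with no dimension shift, so $W^d_n$ is read off from Lemma~\ref{paralem}. The same ``$+3$ in length, $+m$ in dimension'' phenomenon—checked across the three residues of $n+1$ modulo $3$, including the wedge case—gives the cap identity $W^d_n=W^{d-m}_{n-3}$. Substituting $\sum_{k=2}^{n-4}T^{d-m}_{n-3,k}=C^{d-m}_{n-3}-W^{d-m}_{n-3}$ into the displayed teeth sum and then using $W^d_n=W^{d-m}_{n-3}$ causes the two cap terms to cancel, leaving exactly $C^d_n=C^{d-2}_{n-3}+C^{d-(m+1)}_{n-4}+C^{d-m}_{n-3}$.

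The main obstacle I anticipate is justifying the join computation rigorously: I must verify that processing a disjoint union one factor at a time yields an acyclic matching whose critical cells are exactly the unions of critical cells of the factors, with dimensions adding as $\dim\sigma_Y+\dim\sigma_\Delta+1$, and that a contractible factor forces the whole branch to collapse. I would also need to handle the boundary bookkeeping carefully—confirming that the basepoint convention defining $C^0_n$ is compatible with the join formula, that the degenerate values $\Delta^m_0$ and $\Delta^m_{-1}$ enter only through the tabulated initial data of Proposition~\ref{cellseed}, and that the teeth ranges degenerate correctly for the smallest cases $n=4,5$ (where the sum over $k\ge 5$ is empty and the term $C^{d-m}_{n-3}$ is supplied entirely by the cap via $W^d_n=W^{d-m}_{n-3}$).
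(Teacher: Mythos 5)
Your proposal is correct and takes essentially the same approach as the paper's proof: both decompose $C^d_n$ by the node of the matching tree, evaluate the $k=2$ and $k=3$ teeth directly to obtain $C^{d-2}_{n-3}$ and $C^{d-(m+1)}_{n-4}$, and use the ``$+3$ in tendril length, $+m$ in dimension'' shift (for both the $Y^m$ factors and the $\widehat{Y}^m$ cap) to identify the remaining contributions with those of $\Delta^m_{n-3}$, reassembling $C^{d-m}_{n-3}$. The only differences are cosmetic bookkeeping: the paper writes the cap as $C^d_n(\emptyset)$ and absorbs it into the shifted sum $\sum_{k\geq 4} C^d_n(k) + C^d_n(\emptyset) = C^{d-m}_{n-3}$ directly, rather than cancelling cap terms as you do, and it treats $k=4$ through the general shift (both sides vanishing) rather than separately.
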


\begin{proof}
Assume $n \geq 4$ and $d \geq 0$.  
When applied to $\Ind(\Delta^m_n)$, the Comb Algorithm generates factors of the form $\Ind(Y^m_{k-1} \biguplus \Delta^m_{n-(k+1)})$ homeomorphic to $\Ind(Y^m_{k-1}) \ast \Ind(\Delta^m_{n-(k+1)})$ for $1 \leq k \leq n$.  
We let $C^d_n(k)$ be the number of $d$-dimensional cells in $X^m_n$ produced by the Comb Algorithm below the node $\Sigma(\{k\}, N(k) \cup \{1, 2, \dots, k-1\})$ in $\tau(\Delta^m_n)$, i.e. those referenced in Step~4 of the Comb Algorithm.
We use $C^d_n(\emptyset)$ to denote the number of $d$-dimensional cells arising from Step~6 of the Comb Algorithm.
It is clear that $C^d_n = \displaystyle \sum^n_{k=1} C^d_n(k) + C^d_n(\emptyset)$.

First, whenever $k-1 \equiv 0 \bmod 3$, $\Ind(Y^m_{k-1})$ is contractible and, consequently, so is $\Ind(Y^m_{k-1}) \ast \Ind(\Delta^m_{n-(k+1)})$.  Thus, $C^d_n(k) = 0$ when $k-1 \equiv 0 \bmod 3$, and we may assume that $k = 3 \ell$ or $k = 3 \ell + 2$ for some non-negative integer $\ell$.  Also, note that $\Ind(Y^m_{k-1}) \ast \Ind(\Delta^m_{n-(k+1)})$ is contractible for $k = n$ as $\Ind(\Delta^m_{-1})$ is contractible, i.e. $C^d_n(n) = 0$.  These observations subsume Steps 3 and 5 of the Comb Algorithm.

Next, we consider $C^d_n(2)$. 
 Such a $d$-cell must correspond to the set of $d+1$ vertices consisting of the vertex $2$, a single vertex contributed from $\Ind(Y^m_1)$, and $d-1$ vertices contributed from $\Ind(\Delta^m_{n-3})$.  Therefore, the $d$-cells coming from $\Ind(Y^m_1) \ast \Ind(\Delta^m_{n-3})$ are in bijective correspondence with the $(d-2)$-cells of $\Ind(\Delta^m_{n-3})$, i.e. $C^d_n(2) = C^{d-2}_{n-3}$.  Observe that if $d < 2$, then $C^d_n(2) = 0$.

Similarly, we consider $C^d_n(3)$.  The $d+1$ vertices corresponding to such a $d$-cell consist of the vertex $3$, $m$ vertices contributed from $\Ind(Y^m_2)$ (note that $\Ind(Y^m_2) \simeq S^{m-1}$), and $d-m$ vertices contributed from $\Ind(\Delta^m_{n-4})$, provided $d-m > 0$.  Therefore, the $d$-cells coming from $\Ind(Y^m_2) \ast \Ind(\Delta^m_{n-4})$ are in bijective correspondence with the $(d-(m+1))$-cells of $\Ind(\Delta^m_{n-4})$, i.e. $C^d_n(3) = C^{d-(m+1)}_{n-4}$.  Observe that if $d < m+1$, then $C^d_n(3) = 0$.

Lastly, we simultaneously consider $C^d_n(k)$ for $k \in \{4, 5, \dots, n, \emptyset \}$.  
As before, we can disregard $k \equiv 1 \bmod 3$ and $k = n$.
First, assume that $k = 3\ell$ for some positive integer $\ell$, which implies that $\Ind(Y^m_{k-1}) \simeq S^{m\ell - 1}$.  
Now, consider $C^d_n(k)$.  
A $d$-cell contributed from the factor $\Ind(Y^m_{k-1}) \ast \Ind(\Delta^m_{n-(k+1)})$ consists of the vertex $k$, $m\ell$ vertices from $\Ind(Y^m_{k-1})$, and $d - m \ell$ vertices from $\Ind(\Delta^m_{n-(k+1)})$, provided $d - m\ell > 0$.
We observe that the related factor $\Ind(Y^m_{(k-1)-3} \biguplus \Delta^m_{n-(k+1)})$ is generated when the Comb Algorithm is applied to $\Ind(\Delta^m_{n-3})$.  
It is straightforward to show that the difference in dimension of the critical cell in $\Ind(Y^m_{k-1})$ from that of the critical cell in $\Ind(Y^m_{k-4})$ is $m$.  
This implies that the $d - m\ell$ vertices from $\Ind(\Delta^m_{n-(k+1)})$ that generate a given $d$-cell in the factor $\Ind(Y^m_{k-1}) \ast \Ind(\Delta^m_{n-(k+1)})$ for $\Ind(\Delta^m_n)$ also generate a $(d-m)$-cell in the factor $\Ind(Y^m_{k-4} \biguplus \Delta^m_{n-(k+1)})$ for $\Ind(\Delta^m_{n-3})$ and vice versa.  
Consequently, $C^d_n(k) = C^{d-m}_{n-3}(k-3)$, provided $d \geq m$.  A similar argument holds for $k \equiv 2 \bmod 3$.

Next, we see that $C^d_n(\emptyset)$ = $C^{d-m}_{n-3}(\emptyset)$ if $d \geq m$.  This observation follows because the difference in dimensions of the critical cells in $\Ind(\widehat{Y}^m_{n+1})$ from those of the critical cells in $\Ind(\widehat{Y}^m_{n-2})$ is $m$ while the number of critical cells is constant modulo 3.

Therefore, $\displaystyle \sum^n_{k=4} C^d_n(k) + C^d_n(\emptyset) = \displaystyle \sum^{n-3}_{k=1} C^{d-m}_{n-3}(k) + C^{d-m}_{n-3}(\emptyset) = C^{d-m}_{n-3}$, and we conclude that 
\[
C^d_n = \sum^n_{k=1} C^d_n(k) + C^d_n(\emptyset) = C^{d-2}_{n-3} + C^{d-(m+1)}_{n-4} + C^{d-m}_{n-3} \, .
\]
\end{proof}

Let $\chi^m_n$ denote the reduced Euler characteristic of $X^m_n$.
Note that since $C_n^0$ is one less than the number of zero-dimensional cells in $X_n^m$, we have $\chi^m_n = \sum_{d \geq 0} (-1)^d C^d_n$.

\begin{corollary}
Given the initial conditions from Proposition~\ref{cellseed}, if $m \geq 2$ and $n \geq 4$, then $$\chi^m_n = (1 + (-1)^m) \chi^m_{n-3} + (-1)^{m+1}\chi_{n-4}^m.$$ 
\end{corollary}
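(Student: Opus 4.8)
The plan is to substitute the cell recurrence of Theorem~\ref{cellrec} directly into the defining alternating sum $\chi^m_n = \sum_{d \geq 0}(-1)^d C^d_n$ and then reindex each resulting sum so that it again has the shape of a reduced Euler characteristic. Since the recurrence $C^d_n = C^{d-2}_{n-3} + C^{d-(m+1)}_{n-4} + C^{d-m}_{n-3}$ holds for all $n \geq 4$ and all $d \geq 0$, and since $C^d_\cdot = 0$ whenever the superscript is negative, every manipulation below stays within the regime where the recurrence is valid.

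First I would write
\[
\chi^m_n = \sum_{d \geq 0}(-1)^d C^{d-2}_{n-3} + \sum_{d \geq 0}(-1)^d C^{d-(m+1)}_{n-4} + \sum_{d \geq 0}(-1)^d C^{d-m}_{n-3},
\]
and treat the three sums separately. In each sum I substitute a new index $e$ for the shifted superscript: for the first, $e = d-2$ gives $(-1)^d = (-1)^e$ and hence $\sum_e (-1)^e C^e_{n-3} = \chi^m_{n-3}$; for the second, $e = d-(m+1)$ gives $(-1)^d = (-1)^{m+1}(-1)^e$ and hence $(-1)^{m+1}\chi^m_{n-4}$; for the third, $e = d-m$ gives $(-1)^d = (-1)^m (-1)^e$ and hence $(-1)^m \chi^m_{n-3}$. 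Collecting the two terms with subscript $n-3$ yields the coefficient $1 + (-1)^m$, while the single $n-4$ term carries $(-1)^{m+1}$, which is exactly the claimed recurrence.

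The only thing requiring care is the bookkeeping at the bottom of each reindexed sum. After the shift, each new summation index formally starts at a negative value ($e \geq -2$, $e \geq -(m+1)$, and $e \geq -m$ respectively), but the convention $C^e_\cdot = 0$ for $e < 0$ means those extra terms vanish, so each sum genuinely equals $\sum_{e \geq 0}(-1)^e C^e_\cdot$, i.e. the reduced Euler characteristic in the lower parameter. One should also confirm that the reduced normalization of the $0$-cell count is respected throughout; since the recurrence of Theorem~\ref{cellrec} is stated in terms of the very same $C^d_n$ used to define $\chi^m_n$, no separate adjustment is needed. Thus the argument is a one-line substitution followed by three parity-tracking reindexings, and I expect no substantive obstacle beyond the sign bookkeeping.
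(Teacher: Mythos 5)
Your proposal is correct and follows essentially the same route as the paper: substitute the recurrence of Theorem~\ref{cellrec} into $\sum_{d\geq 0}(-1)^d C^d_n$, split into three sums, reindex each with the appropriate sign factor $(-1)^2$, $(-1)^{m+1}$, $(-1)^m$, and use the vanishing convention for negative superscripts to restart each sum at zero. The paper's proof is exactly this computation, with the same boundary-term bookkeeping noted after its final display.
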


\begin{proof}
Fix $m$ and $n$ as above.
Using the recursion \eqref{eqn:rec} for $C_n^d$, we obtain
\begin{align*}
\chi^m_n =& \sum_{d \geq 0} (-1)^d C^d_n \\
=& \sum_{d \geq 0} (-1)^d \left( C^{d-2}_{n-3} + C^{d-(m+1)}_{n-4} +C^{d-m}_{n-3} \right) \\
=&  \left( \sum_{d \geq 0} (-1)^d C^{d-2}_{n-3} \right) + \left( \sum_{d \geq 0} (-1)^d C^{d-(m+1)}_{n-4} \right) \\
& \hspace{1cm}+ \left( \sum_{d \geq 0} (-1)^d C^{d-m}_{n-3} \right) \\
=&  \left( \sum_{d \geq 0} (-1)^{d-2} C^{d-2}_{n-3} \right) + \left( (-1)^{m+1} \sum_{d \geq 0} (-1)^{d-(m+1)} C^{d-(m+1)}_{n-4} \right) \\
 & \hspace{1cm} + \left( (-1)^m \sum_{d \geq 0}(-1)^{d-m} C^{d-m}_{n-3} \right) \\
=&  \left( \sum_{d \geq 0} (-1)^d C^d_{n-3} \right) + \left( (-1)^{m+1} \sum_{d \geq 0} (-1)^d C^d_{n-4} \right) \\
& \hspace{1cm}+ \left( (-1)^m \sum_{d \geq 0}(-1)^d C^d_{n-3} \right) \\
=&  \chi^m_{n-3} + (-1)^{m+1} \chi^m_{n-4} + (-1)^m \chi^m_{n-3} \\
=&  (1 + (-1)^m)\chi^m_{n-3} + (-1)^{m+1} \chi^m_{n-4}
\end{align*}
The fifth equality in the above list is obtained via reindexing and the observations that $C^{d-(m+1)}_{n-4}$ = 0 for $d < m$ and $C^{d-m}_{n-3} = 0$ for $d < m-1$.
\end{proof}

\begin{corollary}
When $m$ is even, $\chi^m_n$ satisfies the recursion $a_n = a_{n-3} - a_{n-2} - a_{n-1}$ with initial conditions $a_0 = 1$, $a_1 = -2$, and $a_2 = 1$, and hence has generating function $\frac{1-x}{1+x+x^2-x^3}$.
\end{corollary}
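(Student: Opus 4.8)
The plan is to reduce everything to the recurrence from the preceding corollary, specialized to even $m$, together with the four smallest values of $\chi^m_n$. First I would record that, when $m$ is even, the preceding corollary gives $\chi^m_n = 2\chi^m_{n-3} - \chi^m_{n-4}$ for $n \geq 4$, since $1 + (-1)^m = 2$ and $(-1)^{m+1} = -1$. Next I would extract the initial data from Proposition~\ref{cellseed}: using $\chi^m_n = \sum_{d \geq 0}(-1)^d C^d_n$ and reading off the only nonzero cell counts gives $\chi^m_0 = 1$, $\chi^m_1 = -2$, $\chi^m_2 = 1$, and $\chi^m_3 = 2$. The one point requiring care here is the $m = 2$ column of the table, where the entries $C^1_1$ and $C^{m-1}_1$ (respectively $C^2_3$ and $C^m_3$) name the \emph{same} cell and so must not be added twice; once this is accounted for, the same four values result for all even $m$, as one checks separately for $m = 2$ and for $m \geq 4$ (the latter using that $m-1$ is odd, so $(-1)^{m-1} = -1$).

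With these in hand, I would prove the order-three recurrence $\chi^m_n = \chi^m_{n-3} - \chi^m_{n-2} - \chi^m_{n-1}$ for $n \geq 3$ by a short telescoping argument. Set $b_n := \chi^m_n + \chi^m_{n-1} + \chi^m_{n-2} - \chi^m_{n-3}$, so that the target recurrence is exactly the assertion $b_n = 0$. A direct computation gives $b_n - b_{n-1} = \chi^m_n - 2\chi^m_{n-3} + \chi^m_{n-4}$, which vanishes for $n \geq 4$ by the order-four recurrence of the previous paragraph; hence $b_n$ is constant for $n \geq 3$. Since $b_3 = \chi^m_3 + \chi^m_2 + \chi^m_1 - \chi^m_0 = 2 + 1 - 2 - 1 = 0$, we conclude $b_n = 0$ for all $n \geq 3$. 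This is the claimed recurrence, and together with $\chi^m_0 = 1$, $\chi^m_1 = -2$, $\chi^m_2 = 1$ it identifies $\chi^m_n$ with the sequence $a_n$.

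Finally I would pass to generating functions. Writing $F(x) = \sum_{n \geq 0} \chi^m_n x^n$ and multiplying by the denominator, the relation just proved shows that the coefficient of $x^n$ in $(1 + x + x^2 - x^3)F(x)$ vanishes for every $n \geq 3$, while the coefficients at $n = 0, 1, 2$ evaluate to $1$, $-1$, and $0$ using the three initial values. Hence $(1 + x + x^2 - x^3)F(x) = 1 - x$, giving $F(x) = \frac{1-x}{1 + x + x^2 - x^3}$, as claimed. (As a consistency check one can instead solve the order-four recurrence directly, obtaining $F(x) = \frac{(1-x)^2}{1 - 2x^3 + x^4}$, and then cancel the common factor $1-x$ via the factorization $1 - 2x^3 + x^4 = (1-x)(1 + x + x^2 - x^3)$.)

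I expect no serious obstacle in this argument; the only genuine subtlety is the bookkeeping in extracting the initial conditions from Proposition~\ref{cellseed}, specifically avoiding the double-counting of coincident cells in the $m = 2$ case and confirming that the $m = 2$ and $m \geq 4$ computations yield the same four base values. The telescoping step and the generating-function extraction are then routine once the order-four recurrence and those base values are in place.
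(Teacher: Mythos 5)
Your proof is correct and follows essentially the same route as the paper's: both specialize the preceding corollary to even $m$ to obtain $\chi^m_n = 2\chi^m_{n-3} - \chi^m_{n-4}$, extract the initial values $1, -2, 1, 2$ from Proposition~\ref{cellseed}, and then derive the third-order recurrence --- your telescoping of $b_n = \chi^m_n + \chi^m_{n-1} + \chi^m_{n-2} - \chi^m_{n-3}$ is exactly the paper's induction step repackaged, since $b_n - b_{n-1} = 0$ is the same algebraic identity the paper uses to pass from the order-four to the order-three relation. Your explicit derivation of the generating function and your caution about not double-counting the coincident table entries in the $m=2$ case are sensible additions, but they do not constitute a different approach.
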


\begin{proof} 
Assume that $m \geq 2$ is even.  
First, observe that $\chi^m_0$ = 1, $\chi^m_1 = -2$, and $\chi^m_2 = 1$ by Proposition~\ref{cellseed}, so both relations have the same initial conditions.  
We can easily verify that $\chi^m_3 = 2 = 1 - (-2) -1 = a_0  - a_1 - a_2 = a_3$.  
Now, for fixed $n$, assume that $\chi^m_{\ell}$ satisfies both relations for $\ell < n$. 
Since $m$ is even, we have that $\chi^m_n = 2 \cdot \chi^m_{n-3} - \chi^m_{n-4} = \chi^m_{n-3} + (\chi^m_{n-3} - \chi^m_{n-4})$.  
By assumption, $\chi^m_{n-1} = \chi^m_{n-4} - \chi^m_{n-3} - \chi^m_{n-2}$, which implies that $\chi^m_{n-3} - \chi^m_{n-4} = - \chi^m_{n-2} - \chi^m_{n-1}.$  
Therefore, $\chi^m_n = \chi^m_{n-3} + (\chi^m_{n-3} - \chi^m_{n-4}) = \chi^m_{n-3} - \chi^m_{n-2} - \chi^m_{n-1}$, i.e. $\chi^m_n$ satisfies both relations by induction.
(This sequence is the A078046 entry in the OEIS \cite{OEIS}.)
\end{proof}

\begin{remark}
When $m$ is odd, $\chi^m_n = \chi^m_{n-4}$.  
It is easy to verify that $\chi^m_0 = 1$, $\chi^m_1 = 0$, $\chi^m_2 = -1$, and $\chi^m_3 = 1$ from Proposition~\ref{cellseed}.  
Therefore, $\chi^m_n \in \{-1,0,1\}$ depending on the value of $n$ modulo $4$. 
\end{remark}

When $m=2$, the dimensions of $C_n^d$ have an interesting enumerative interpretation.
The sequence A201780 in OEIS \cite{OEIS} is the Riordan array of 
\[
\left( \dfrac{(1-x)^2}{1-2x}, \dfrac{x}{1-2x} \right)
\]
 which can be alternatively defined by
\begin{equation} \label{riordan}
T(j,k) = 2 \cdot T(j-1,k) + T(j-1,k-1)
\end{equation}
with initial conditions $T(0,0) = 1$, $T(1,0) = 0$, $T(2,0) = 1$, and $T(j,k) = 0$ if $k<0$ or $j<k$.

\begin{prop}
When $m=2$, \eqref{eqn:rec} reduces to $C_n^d = 2 C^{d-2}_{n-3} + C^{d-3}_{n-4}$.  Consequently, $C^d_n = T(n-d+2,3d-2n)$, while $T(j,k) = C_{2(j-2)+k}^{3(j-2)+k}$.
\end{prop}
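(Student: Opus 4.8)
The plan is to prove the first identity $C^d_n = T(n-d+2,\,3d-2n)$ by an invertible affine change of variables that carries the specialized recurrence for $C^d_n$ onto the Riordan recurrence \eqref{riordan}, and then to obtain the second identity by simply inverting that change of variables. Setting $m=2$ in \eqref{eqn:rec} is immediate: the first and third summands $C^{d-2}_{n-3}$ and $C^{d-m}_{n-3}=C^{d-2}_{n-3}$ coincide, while the middle summand becomes $C^{d-(m+1)}_{n-4}=C^{d-3}_{n-4}$, so that $C^d_n = 2C^{d-2}_{n-3}+C^{d-3}_{n-4}$ for $n\geq 4$, as asserted.

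Next I would introduce the substitution $j=n-d+2$ and $k=3d-2n$ and check that it sends the two predecessor index pairs of the specialized recurrence to the pairs appearing in \eqref{riordan}. A direct computation gives $(d-2,\,n-3)\mapsto(j-1,\,k)$ and $(d-3,\,n-4)\mapsto(j-1,\,k-1)$, so that under this substitution the relation $C^d_n=2C^{d-2}_{n-3}+C^{d-3}_{n-4}$ becomes precisely $T(j,k)=2T(j-1,k)+T(j-1,k-1)$. In particular the substitution is forced: requiring the two $C^{d-2}_{n-3}$ terms to collapse into a single $(j-1,k)$ contribution and the $C^{d-3}_{n-4}$ term to land at $(j-1,k-1)$ pins down the affine map uniquely. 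Writing $\widetilde{T}(j,k):=C^d_n$ with $(d,n)$ recovered from $(j,k)$, the array $\widetilde{T}$ therefore obeys the Riordan recurrence wherever the $C$-recurrence is valid, i.e. for $n\geq 4$.

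I would then argue by strong induction on $n$. The base cases $n\in\{0,1,2,3\}$ are handled by Proposition~\ref{cellseed}: for $m=2$ its four nonzero entries $C^0_0,C^1_1,C^2_2,C^2_3$ map under the substitution to $T(2,0),T(2,1),T(2,2),T(3,0)$, and one checks these equal $1,2,1,2$ respectively, matching the corresponding Riordan entries; every other $(d,n)$ with $n\leq 3$ lands in the vanishing region $k<0$ or $j<k$ of the array, where both sides are $0$. For the inductive step $n\geq 4$ I would first note that the image pair $(j,k)=(n-d+2,\,3d-2n)$ is never one of the three exceptional seed cells $(0,0),(1,0),(2,0)$ of \eqref{riordan} (each forces $n\leq 0$), so the Riordan recurrence genuinely applies there; combining it with the specialized recurrence and the induction hypothesis at $n-3$ and $n-4$ closes the induction and yields $C^d_n=T(n-d+2,\,3d-2n)$. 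Finally, solving the linear system $j=n-d+2$, $k=3d-2n$ for $d$ and $n$ gives $d=2(j-2)+k$ and $n=3(j-2)+k$, i.e. $T(j,k)=C^{\,2(j-2)+k}_{\,3(j-2)+k}$, which is the inverse relationship.

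The main obstacle is not the recurrence matching, which is a one-line computation, but the careful bookkeeping of the boundary and vanishing conditions. One must confirm that the $C$-side initial data of Proposition~\ref{cellseed} maps precisely onto the Riordan seeds together with the array's zero region, and that the cells where \eqref{riordan} is overridden by the seed values correspond to no valid index with $n\geq 4$; otherwise the inductive step could silently invoke the recurrence at a point where it fails. Once these correspondences between the two sets of initial/degenerate conditions are verified, the rest of the argument is routine.
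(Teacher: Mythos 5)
Your proposal is correct and takes essentially the same route as the paper: the same affine substitution $(j,k)=(n-d+2,\,3d-2n)$ (the paper's functions $J(x,y)=y-x+2$, $K(x,y)=3x-2y$), the same check that the nonzero seeds $C^0_0, C^1_1, C^2_2, C^2_3$ equal $T(2,0), T(2,1), T(2,2), T(3,0)$, and the same recurrence-matching computation, with your explicit induction and vanishing-region bookkeeping merely making rigorous what the paper leaves implicit. One remark: your inverse formula $T(j,k)=C^{2(j-2)+k}_{3(j-2)+k}$ (dimension $2(j-2)+k$, index $3(j-2)+k$) is the correct, notation-consistent reading of the statement's second identity, which in the paper silently swaps its sub- and superscript conventions.
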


\begin{proof}

The initial conditions of $C_d^n$ are realized as entries in this Riordan array as follows.
We have $C_0^0 = 1 = T(2,0)$, and
\begin{align*}
C_1^1 &= 2\\
 &= 2(2 \cdot 0 + 1) + 0 \\
&= 2(2 \cdot T(0,1) + T(0,0))+T(1,0) \\
&= 2 \cdot T(1,1) + T(1,0) \\
&= T(2,1)
\end{align*}

\begin{align*}
C_2^2 &= 1 \\
&= 2 \cdot 0 + 1 \\
&= 2 \cdot T(1,2) + T(1,1)\\
&= T(2,2)
\end{align*}

\begin{align*}
C_2^3 &= 2 \\
& = 2 \cdot 1 + 0 \\
&= 2 \cdot T(2,0) + T(2,-1) \\
&= T(3,0)
\end{align*}

Define functions $J(x,y)=y-x+2$ and $K(x,y)=3x-2y$.
Observe that $J(d,n) = n-d+2$, $J(d-2,n-3) = n-d+1$, and $J(d-3,n-4) = n-d+1$.
Similarly, $K(d,n) = 3d-2n$, $K(d-2,n-3) = 3d-2n$, and $K(d-3,n-4) = 3d-2n-1.$
Applying the relation \eqref{riordan} to the entry $T(J(d,n),K(d,n)) = T(n-d+2,3d-2n)$ gives
\begin{align*}
& T(n-d+2,3d-2n) = \\
& 2 \cdot T(n-d+1,3d-2n) + T(n-d+1,3d-2n-1) =\\
& 2 \cdot T(J(d-2,n-3),K(d-2,n-3)) + T(J(d-3,n-4),K(d-3,n-4))
\end{align*}
Thus, the recursion applied to $T(n-d+2,3d-2n)$ matches that of $C_d^n$.

The proof of the second half of the claim is similar and omitted.
\end{proof}


\section{Homological Properties of $X_n^m$}\label{sec:hom}

In this section we consider homological implications of the comb algorithm.

\begin{theorem} \label{mindim}
Fix $m \geq 2$ and $n \geq 0$.  Define 
\[
d^{min}_n := \left\{
\begin{array}{ll}
\left\lfloor \dfrac{2n+2}{3} \right\rfloor &\mbox{if } n=3k \mbox { or } n=3k+1 \\
2 \left\lfloor \dfrac{n-1}{3} \right\rfloor + m & \mbox{if } n=3k+2 \\
\end{array} \right. \, .
\]

Then, $C^d_n = 0$ if $0 \leq d < d^{min}_n$, excluding the base $0$-cell.  When $m=2$, these two formulas coincide.
\end{theorem}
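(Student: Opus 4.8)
The plan is to argue by strong induction on $n$, feeding the recursion of Theorem~\ref{cellrec} the base data recorded in Proposition~\ref{cellseed}. The one structural fact that makes this work is that each $C^d_n$ counts cells and is therefore non-negative; consequently the right-hand side of \eqref{eqn:rec} vanishes if and only if all three of its summands vanish. This reduces the entire statement to tracking, for each summand, the least dimension in which it can be non-zero, and then taking a minimum.

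Before starting the induction I would rewrite $d^{min}_n$ in the cleaner residue form obtained by evaluating the floors: writing $n = 3k+r$ one gets $d^{min}_{3k} = 2k$, $d^{min}_{3k+1} = 2k+1$, and $d^{min}_{3k+2} = 2k+m$. The base cases $n \in \{0,1,2,3\}$ are then immediate from Proposition~\ref{cellseed}: here $d^{min}_0 = 0$, $d^{min}_1 = 1$, $d^{min}_2 = m$, $d^{min}_3 = 2$, and the nonzero entries of Proposition~\ref{cellseed} all sit in dimensions $\geq d^{min}_n$ (e.g.\ for $n=2$ the only nonzero count is $C^m_2$ with $m = d^{min}_2$, and for $n=3$ the lowest nonzero count is in dimension $2 = d^{min}_3$ since $m \geq 2$).

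For the inductive step, fix $n \geq 4$ and assume the claim for all smaller indices. Combining the convention $C^e_j = 0$ for $e<0$ with the induction hypothesis applied to $n-3$ and $n-4$, each summand of \eqref{eqn:rec} vanishes below an explicit threshold:
\begin{align*}
C^{d-2}_{n-3} &= 0 \text{ for } d < d^{min}_{n-3} + 2, \\
C^{d-m}_{n-3} &= 0 \text{ for } d < d^{min}_{n-3} + m, \\
C^{d-(m+1)}_{n-4} &= 0 \text{ for } d < d^{min}_{n-4} + m + 1.
\end{align*}
Since $m \geq 2$, the middle threshold dominates the first, so by non-negativity $C^d_n = 0$ whenever $d < \min\{d^{min}_{n-3} + 2,\ d^{min}_{n-4} + m + 1\}$. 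It therefore suffices to establish the arithmetic identity $d^{min}_n = \min\{d^{min}_{n-3} + 2,\ d^{min}_{n-4} + m + 1\}$; in fact only the inequality $d^{min}_n \leq \min\{\cdots\}$ is strictly needed.

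I expect the main obstacle to be nothing conceptual but rather the careful bookkeeping needed to verify this identity by cases on $n \bmod 3$. The subtle point is that $n-3$ carries the same residue as $n$ while $n-4$ shifts the residue down by one, so the three residue classes become coupled through the $n-4$ term. Using the simplified forms above one checks: if $n=3k$ the two candidates are $2k$ and $2k-3+2m$, whose minimum is $2k$ because $m \geq 2$; if $n=3k+1$ they are $2k+1$ and $2k-1+m$, with minimum $2k+1$; and if $n=3k+2$ both equal $2k+m$. In every class the minimum is $d^{min}_n$, which closes the induction. Finally, the coincidence of the two formulas when $m=2$ is a one-line floor computation: for $n=3k+2$ the top formula gives $\lfloor (2n+2)/3\rfloor = 2k+2$, matching $2\lfloor (n-1)/3\rfloor + 2 = 2k+2$.
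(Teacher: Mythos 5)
Your proof is correct, but it takes a genuinely different route from the paper's. The paper proves Theorem~\ref{mindim} by going back to the matching tree itself: by induction on $n$, it examines each node $\Sigma(\{j\}, N(j)\cup\{1,\dots,j-1\})$ of the Comb Algorithm, bounds the dimension $\delta_j + d^{min}_{n-(j+1)} + 2$ of any critical cell below that node via a case analysis on $j \bmod 3$ and $n \bmod 3$ (using Lemma~\ref{starlem} for the $\Ind(Y^m_{j-1})$ factor and Lemma~\ref{paralem} for the $\Ind(\widehat{Y}^m_{n+1})$ leaf), and checks that every such dimension is at least $d^{min}_n$. You instead treat the already-established recursion \eqref{eqn:rec} of Theorem~\ref{cellrec} as a black box, so that the whole statement collapses to the arithmetic identity $d^{min}_n = \min\{d^{min}_{n-3}+2,\ d^{min}_{n-4}+m+1\}$, which you verify correctly in all three residue classes; your thresholds, the handling of the $n-4=0$ edge case via the convention $C^e_j=0$ for $e<0$, and the base cases against Proposition~\ref{cellseed} all check out. (One small simplification: non-negativity of the $C^d_n$ is not actually needed for your direction of the argument --- if all three summands of \eqref{eqn:rec} vanish then $C^d_n=0$ trivially; non-negativity only matters for the converse, which you never use.) What your approach buys is brevity: all of the paper's per-node bookkeeping is absorbed into the proof of Theorem~\ref{cellrec}, and only one min-identity remains. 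What the paper's approach buys is a little extra information: its tree analysis shows (via the $j=2$ factor $\Ind(Y^m_1)\ast\Ind(\Delta^m_{n-3})$) that a cell of dimension exactly $d^{min}_n$ exists, i.e., the bound is attained. That attainment is not part of the stated theorem, and it could also be recovered in your framework --- by induction $C^{d^{min}_n}_n \geq C^{d^{min}_n-2}_{n-3} = C^{d^{min}_{n-3}}_{n-3} \geq 1$, using non-negativity of the remaining summands --- but you would need to add that observation explicitly.
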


\begin{proof}
By Proposition~\ref{cellseed}, the claim holds for $n \in \{0,1,2,3\}$.  
We proceed by induction.

For $n \geq 4$, suppose that the claim is true for all $0 \leq i < n$.  
Consider the leaf $\Sigma(\{j\}, N(j) \cup \{1, 2, \dots, j-1\})$ from the Comb Algorithm applied to $\Ind(\Delta^m_n)$.  
Steps~3 and~4 of the Comb Algorithm allow us to assume that $j \in \{2, \dots, n\}$.  
If $j < n$, then the remaining subgraph of $\Delta^m_n$ from which we may query vertices is isomorphic to $Y^m_{j-1} \biguplus \Delta^m_{n-(j+1)}$, which corresponds to a subcomplex of $\Ind(\Delta^m_n)$ of the form $\Ind(Y^m_{j-1})~\ast~\Ind(\Delta^m_{n-(j+1)})$.
Moreover, by Lemma~\ref{starlem}, $\Ind(Y^m_{j-1})$ is contractible when $j \bmod 3 \equiv 1$.  
Since joins respect homotopy equivalences, $\Ind(Y^m_{j-1}) \ast \Ind(\Delta^m_{n-(j+1)})$ is contractible when $j \bmod 3 \equiv 1$, thus we may assume that $j = 3\ell$ or $j = 3\ell + 2$ for some non-negative integer $\ell$.
Observe that when $j=3\ell$ or $3\ell+2$, $\Ind(Y^m_{j-1})$ is homotopy equivalent to $S^{m\ell-1}$ or $S^{m\ell}$, respectively. 
We will let $\delta_j$ denote the dimension of this sphere.

Now, consider $j \in \{2, \dots, n-1\}$. 
Since $n-(j+1) < n$, the induction hypothesis holds for $\Ind(\Delta^m_{n-(j+1)})$.
We count the minimum number of vertices in a critical cell in the matching tree below the node $\Sigma(\{j\}, N(j) \cup \{1, 2, \dots, n-1\})$, namely:
	\begin{itemize}
	\item 1 vertex for $j$ itself
	\item $\delta_j + 1$ vertices from $\Ind(Y^m_{j-1})$
	\item $d^{min}_n + 1$ vertices from $X^m_{n-(j+1)}$
	\end{itemize}
The total number of vertices corresponds to a cell of dimension $\delta_j + d^{min}_n + 2$ below the node $\Sigma(\{j\}, N(j) \cup \{1, 2, \dots, j-1\})$.

As an aside, if $j = n$, the remaining subgraph of $\Delta^m_n$ from which we may query vertices is isomorphic to $\widehat{Y}^m_{n+1}$, so we can also expect the subcomplex $\Ind(\widehat{Y}^m_{n+1})$ to contribute one or two cells of appropriate dimension per Lemm~\ref{paralem}. 

Now, we consider three cases.




\emph{Case: $n = 3k$.}
The proposed $d^{min}_n$ is $\left\lfloor \frac{2n+2}{3} \right\rfloor = \left\lfloor \frac{6k+2}{3} \right\rfloor = 2k$.
If $j = 3\ell$, then we have $n-(j+1)=3(k-\ell-1)+2$, hence $d^{min}_{n-(j+1)} = 2(k-\ell-1)+m$.
Thus, $\delta_j + d^{min}_{n-(j+1)} + 2 = (m\ell-1) + 2(k-\ell-1)+m + 2 = 2k + (m-2)\ell + (m-1)$.
If $j = 3\ell+2$, then we have $n-(j+1)=3(k-\ell-1)$, hence $d^{min}_{n-(j+1)} = 3(k-\ell-1)$.
Thus, $\delta_j + d^{min}_{n-(j+1)} + 2 = (m\ell) + 2(k-\ell-1) + 2 = 2k + (m-2)\ell$.

The cell contributed by the subcomplex $\Ind(\widehat{Y}^m_{n+1})$ is of dimension $mk$.

Observe that these three values are greater than or equal to $2k$ as $m \geq 2$.
Therefore, none of the cells in $X^m_n$ are of dimension smaller than $\left\lfloor \frac{2n+2}{3} \right\rfloor$.  
Further, when $j=2$, we have that the factor $\Ind(Y^m_1) \ast \Ind(\Delta^m_{n-3})$ produces a cell of dimension exactly $2k$.

\emph{Case: $n = 3k+1$.}  
The proposed $d^{min}_n$ is $\left\lfloor \frac{2n+2}{3} \right\rfloor = \left\lfloor \frac{6k+4}{3} \right\rfloor = 2k+1$.
If $j = 3\ell$, then we have $n-(j+1))=3(k-\ell)$, hence $d^{min}_{n-(j+1)} = 2(k-\ell)$.
Thus, $\delta_j + d^{min}_{n-(j+1)} + 2 = (m\ell-1) + 2(k-\ell) + 2 = 2k + (m-2)\ell + 1$.
If $j = 3\ell+2$, then we have  $n-(j+1))=3(k-\ell-1) + 1$, hence $d^{min}_{n-(j+1)} = 2(k-\ell)-1$.
Thus, $\delta_j + d^{min}_{n-(j+1)} + 2 = (m\ell) + 2(k-\ell)-1 + 2 = 2k + (m-2)\ell + 1$.

The cells contributed by the subcomplex $\Ind(\widehat{Y}^m_{n+1})$ are of dimensions $mk+1$ and $m(k+1)-1$.

Observe that these four values are greater than or equal to $2k+1$ as $m \geq 2$.  
Therefore, none of the cells in $X^m_n$ are of dimension smaller than $\left\lfloor \frac{2n+2}{3} \right\rfloor$.  
Further, when $j=2$, we have that the factor $\Ind(Y^m_1) \ast \Ind(\Delta^m_{n-3})$ produces a cell of dimension exactly $2k+1$.

\emph{Case: $n = 3i+2$.} 
The proposed $d^{min}_n$ is $2 \left\lfloor \frac{n-1}{3} \right\rfloor + m = 2\left\lfloor \frac{3k+1}{3} \right\rfloor + m = 2k+m$.
If $j = 3\ell$, then we have $n-(j+1)=3(k-\ell)+1$, hence $d^{min}_{n-(j+1)} =2(k-\ell)+1$.
Thus, $\delta_j + d^{min}_{n-(j+1)} + 2 = (m\ell-1) + 2(k-\ell)+1 + 2 = 2k + (m-2)\ell + 2$.  
Because $j = 3\ell$ and $j \geq 2$, we have $\ell \geq 1$, which implies that  $2k + (m-2)\ell + 2 \geq 2k + (m-2) + 2 = 2k + m$.
If $j = 3\ell+2$, then we have $n-(j+1))=3(k-\ell-1)+2$, hence $d^{min}_{n-(j+1)} =2(k-\ell-1)+m$.
Thus, $\delta_j + d^{min}_{n-(j+1)} + 2 = (m\ell) + 2(k-\ell-1)+m + 2 = 2k + (m-2)\ell + m$.

The cell contributed by the subcomplex $\Ind(\widehat{Y}^m_{n+1})$ is of dimension $m(k+1)$.

Observe that these three values are greater than or equal to $2k+m$ as $m \geq 2$.  
Therefore, none of the cells in $X^m_n$ are of dimension smaller than $2 \left\lfloor \frac{n-1}{3} \right\rfloor + m$.  
Further, when $j=2$, we have that the factor $\Ind(Y^m_1) \ast \Ind(\Delta^m_{n-3})$ produces a cell of dimension exactly $2k+m$.
\end{proof}

\begin{remark}
Theorem~\ref{mindim} shows that $X^m_n$ is at least $d^{min}_n$-connected. 
After a suitable adjustment of notation, this agrees with results of Jonsson \cite[Proposition~2.7]{JonssonMatchingGrids} regarding the connectivity of $\Ind(\Delta_n^2)$.
\end{remark}

\begin{theorem} \label{maxdim}
Fix $m \geq 2$ and $n \geq 0$.  Define 
\[
d^{max}_n := \left\{
\begin{array}{ll}
\left\lfloor \dfrac{3n+2}{4} \right\rfloor &\mbox{if } m = 2 \\
n + 1 + (m-3)\left\lfloor \dfrac{n+2}{3} \right\rfloor&\mbox{otherwise} \\
\end{array} \right. \, .
\]

Then, $C^d_n = 0$ if $d > d^{max}_n$.
\end{theorem}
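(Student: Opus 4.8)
The plan is to argue by induction on $n$ using the recursion \eqref{eqn:rec} from Theorem~\ref{cellrec}, in a manner dual to the lower-bound argument of Theorem~\ref{mindim}. First I would dispatch the base cases $n \in \{0,1,2,3\}$ by reading the nonzero entries off Proposition~\ref{cellseed} and checking, for each of $m=2$ and $m \geq 3$, that the largest $d$ with $C^d_n \neq 0$ does not exceed $d^{max}_n$. For instance, when $m \geq 3$ the table gives top cells in dimensions $0,\ m-1,\ m,\ m$ for $n = 0,1,2,3$, all at most the corresponding values $d^{max}_n = 1,\ m-1,\ m,\ m+1$; the $m=2$ column is checked the same way against $\lfloor (3n+2)/4\rfloor = 0,1,2,2$.

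For the inductive step, fix $n \geq 4$ and assume $C^d_i = 0$ whenever $d > d^{max}_i$ for every $0 \le i < n$. By \eqref{eqn:rec}, $C^d_n$ is the sum of $C^{d-2}_{n-3}$, $C^{d-(m+1)}_{n-4}$, and $C^{d-m}_{n-3}$, so it suffices to show each summand vanishes once $d > d^{max}_n$. By the induction hypothesis this reduces to the three numerical inequalities
\[
d^{max}_n \ge d^{max}_{n-3} + 2, \qquad d^{max}_n \ge d^{max}_{n-4} + (m+1), \qquad d^{max}_n \ge d^{max}_{n-3} + m,
\]
since, for example, $d > d^{max}_n \ge d^{max}_{n-3}+2$ forces $d-2 > d^{max}_{n-3}$ and hence $C^{d-2}_{n-3} = 0$ by induction, and likewise for the other two terms. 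Because $m \ge 2$, the third inequality implies the first, so the real content lies in the second and third.

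These inequalities I would verify by direct floor arithmetic, treating the two cases separately. For $m = 2$ the key computation is $d^{max}_{n-4} + 3 = \lfloor (3n-10)/4 \rfloor + 3 = \lfloor (3n+2)/4 \rfloor = d^{max}_n$, giving the middle inequality with equality, while $d^{max}_{n-3} + 2 = \lfloor (3n-7)/4\rfloor + 2 = \lfloor(3n+1)/4\rfloor \le d^{max}_n$ handles the third. For $m \ge 3$ the decisive identity is $\lfloor (n+2)/3\rfloor = \lfloor (n-1)/3\rfloor + 1$, which yields $d^{max}_n - d^{max}_{n-3} = m$ exactly, so the third inequality holds with equality (and the first then follows from $m \ge 2$). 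For the middle inequality one computes $d^{max}_n - d^{max}_{n-4} = 4 + (m-3)\bigl(\lfloor (n+2)/3\rfloor - \lfloor (n-2)/3\rfloor\bigr)$, and since the bracketed difference is either $1$ or $2$, this quantity is at least $4 + (m-3) = m+1$ for all $m \ge 3$.

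The main obstacle I anticipate is precisely the floor-function bookkeeping in the middle inequality for $m \ge 3$: the difference $\lfloor (n+2)/3\rfloor - \lfloor (n-2)/3\rfloor$ oscillates between $1$ and $2$ according to $n \bmod 3$, and one must confirm that even its minimal value $1$ leaves $d^{max}_n - d^{max}_{n-4} \ge m+1$. The fact that $m-3 \ge 0$ is exactly what makes the worst case work, which is also why the exceptional value $m=2$ must be split off and treated with the sharper floor formula $\lfloor (3n+2)/4\rfloor$. Once these three inequalities are in hand, the induction closes immediately and the remaining verifications are routine.
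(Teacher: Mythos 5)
Your proof is correct, but it takes a genuinely different route from the paper's. The paper proves Theorem~\ref{maxdim} by re-running the induction directly on the matching tree: for each leaf $\Sigma(\{j\}, N(j)\cup\{1,\dots,j-1\})$ with $j\equiv 0,2 \pmod 3$ it bounds the top cell dimension below that leaf by $\delta_j + d^{max}_{n-(j+1)} + 2$, checks all such $j$ against $d^{max}_n$ by floor arithmetic split into the cases $m=2$ and $m\geq 3$, and then separately bounds the contribution of the terminal $\Ind(\widehat{Y}^m_{n+1})$ leaf case-by-case on $n \bmod 3$. You instead invoke the already-established recursion \eqref{eqn:rec} of Theorem~\ref{cellrec}, which collapses the entire inductive step to the three inequalities $d^{max}_n \ge d^{max}_{n-3}+2$, $d^{max}_n \ge d^{max}_{n-4}+(m+1)$, and $d^{max}_n \ge d^{max}_{n-3}+m$; your floor computations verifying these (equality $d^{max}_n - d^{max}_{n-3} = m$ for $m\geq 3$ via $\lfloor (n+2)/3\rfloor = \lfloor (n-1)/3\rfloor + 1$, and $d^{max}_{n-4}+3 = d^{max}_n$ for $m=2$) are accurate, as are your base-case checks against Proposition~\ref{cellseed}. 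Your argument is shorter and more modular, concentrating the combinatorial work in Theorem~\ref{cellrec} and leaving only arithmetic — indeed it mirrors how the paper itself later uses the recursion to compute low-dimensional homology. What the paper's leaf-by-leaf analysis buys in exchange is the extra observation, not needed for the stated theorem, that the bound is sharp: the $j=3$ leaf, i.e.\ the factor $\Ind(Y^m_2)\ast\Ind(\Delta^m_{n-4})$, produces a cell of dimension exactly $d^{max}_n$, a fact your recursion-based argument would only recover with additional work tracking a nonvanishing summand.
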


\begin{proof} 
By Proposition~\ref{cellseed}, the claim holds for $n \in \{0,1,2,3\}$.
We proceed by induction.

Assume $n \geq 4$, and suppose the claim is true for all $0 \leq i < n$.
Consider the maximum dimension of a cell produced below the node $\Sigma(\{j\}, N(j) \cup \{1, 2, \dots, j-1\})$ from the Comb Algorithm applied to $\Ind(\Delta^m_n)$.  As before, we may assume $j \in \{2, \dots, n\}$.  If $j=n$, the remaining subgraph of $\Delta^m_n$ from which we may query vertices is isomorphic to $\widehat{Y}^m_{n+1}$.  If $j < n$, then the remaining subgraph is $Y^m_{j-1} \biguplus \Delta^m_{n-(j+1)}$, which corresponds to a subcomplex of $\Ind(\Delta^m_n)$ of the form $\Ind(Y^m_{j-1}) \ast \Ind(\Delta^m_{n-(j+1)})$.
We will again use the notation $\delta_j$ from the proof of Theorem~\ref{mindim}.

Consider $j \in \{2, \dots, n-1\}$, which implies $n-(j+1) < n$ so that the induction hypothesis holds for $\Ind(\Delta^m_{n-(j+1)})$.
We count the maximum number of vertices in a critical cell in the matching tree below the node $\Sigma(\{j\}, N(j) \cup \{1, 2, \dots, n-1\})$.  Namely,
	\begin{itemize}
	\item 1 vertex for $j$ itself
	\item $\delta_j + 1$ vertices from $\Ind(Y^m_{j-1})$
	\item $d^{max}_{n-(j+1)} + 1$ vertices from $X^m_{n-(j+1)}$
	\end{itemize}
The total number of vertices corresponds to a cell of dimension $\delta_j + d^{max}_{n-(j+1)} + 2$ below the node $\Sigma(\{j\}, N(j) \cup \{1, 2, \dots, j-1\})$.

\emph{Case: $m = 2$.}

The proposed $d^{max}_n$ is $\left\lfloor \frac{3n+2}{4} \right\rfloor$.

If $j=3\ell$, then $\delta_j + d^{max}_{n-(j+1)} + 2 = (2\ell-1) + \left\lfloor \frac{3(n-(3\ell+1))+2}{4} \right\rfloor + 2 = \left\lfloor \frac{3n-\ell+3}{4} \right\rfloor \leq d^{max}_n$ as $\ell \geq 1$ because $j \geq 2$.
If $j=3\ell+2$, then $\delta_j + d^{max}_{n-(j+1)} + 2 = 2\ell + \left\lfloor \frac{3(n-(3\ell+3)+2}{4} \right\rfloor + 2 = \left\lfloor \frac{3n-\ell+1}{4} \right\rfloor \leq d^{max}_n$.

We now consider the contribution of the subcomplex corresponding to $\Ind(\widehat{Y}^m_{n+1})$.
When $n=3k$, $d^{max}_{n} = \left\lfloor \frac{9k+2}{4} \right\rfloor \geq 2k$ while the $\widehat{Y}^m_{n+1}$ contribution has dimension $2k$.
When $n=3k+1$, $d^{max}_{n} = \left\lfloor \frac{9k+5}{4} \right\rfloor \geq 2k+1$ while the $\widehat{Y}^m_{n+1}$ contributions have dimension $2k+1$.
When $n=3k+2$, $d^{max}_{n} = \left\lfloor \frac{9k+8}{4} \right\rfloor \geq 2k+2$ while the $\widehat{Y}^m_{n+1}$ contribution has dimension $2k+2$.

All considered, no cells of $X^m_n$ exceed the proposed maximum dimension.

\emph{Case: $m \geq 3$}.
The proposed $d^{max}_n$ is $n + 1 + (m-3)\left\lfloor \frac{n+2}{3} \right\rfloor$.
If $j=3\ell$, then $\delta_j~+~d^{max}_{n-(j+1)}~+~2~= (m\ell-1) + \left( n-(3\ell+1) + 1 + (m-3) \left\lfloor \frac{n-(3\ell+1)+2}{3} \right\rfloor \right) + 2 = n + 1 + (m-3) \left( \left\lfloor \frac{n-3\ell+1}{3} \right\rfloor + \ell \right) = n + 1 + (m-3) \left\lfloor \frac{n+1}{3} \right\rfloor \leq d^{max}_n$.
If $j=3\ell+2$, then $\delta_j + d^{max}_{n-(j+1)} + 2 = (m\ell) + \left( n-(3\ell+3) + 1 + (m-3) \left\lfloor \frac{n-(3\ell+3)+2}{3} \right\rfloor \right) + 2 = n + (m-3) \left( \left\lfloor \frac{n-3\ell-1}{3} \right\rfloor + \ell \right) = n + (m-3) \left\lfloor \frac{n-1}{3} \right\rfloor \leq d^{max}_n$.

We now consider the contribution of the subcomplex corresponding to $\Ind(\widehat{Y}^m_{n+1})$.
When $n=3k$, $d^{max}_{n} = 3k + 1 + (m-3)\left\lfloor \frac{3k+2}{3} \right\rfloor = mk+1$ while the $\widehat{Y}^m_{n+1}$ contribution has dimension $mk$.
When $n=3k+1$, $d^{max}_{n} = (3k+1) + 1 + (m-3)\left\lfloor \frac{(3k+1)+2}{3} \right\rfloor= m(k+1)-1$ while the $\widehat{Y}^m_{n+1}$ contributions have dimension $mk+1$ and $m(k+1)-1$ respectively.
When $n=3k+2$, $d^{max}_{n} = (3k+2) + 1 + (m-3)\left\lfloor \frac{(3k+2)+2}{3} \right\rfloor = mk+m$ while the $\widehat{Y}^m_{n+1}$ contribution has dimension $mk+m$.

All considered, no cells of $X^m_n$ exceed the proposed maximum dimension.

Observe that, in both cases, if $j=3$, then the $\Ind(Y^m_2) \ast \Ind(\Delta^m_{n-4})$ factor produces a cell of dimension exactly $d^{max}_n$.
\end{proof}

Using Theorems~\ref{cellseed} and~\ref{cellrec} we can create data tables containing dimensions of the integral cellular chain spaces of $X^m_n$ for reasonable values of $n$ and $m$.  
For $m \geq 4$, it is interesting that gaps appear in the dimensions of the chain spaces for low values of $d$ relative to $n$.  
For example, the Comb Algorithm eliminates all cells of dimension $\left\lfloor \frac{2n+2}{3} \right\rfloor + 1, \dots, \left\lfloor \frac{2n+2}{3} \right\rfloor + (m-3)$ when $n=3k$ or $n=3k+1$.
Further, we can explicitly determine the lowest non-vanishing homology for $n=3k$ and $n=3k+1$ when $m \geq 4$; see Jonsson \cite[Lemma~2.3 and Proposition~2.7]{JonssonMatchingGrids} for analogous results when $m=2$.

\begin{theorem}
Suppose that $m \geq 4$, and let $d_n = \left\lfloor \frac{2n+2}{3} \right\rfloor$.  If $n = 3k$ or $n=3k+1$, then $H_{d_n}(X^m_n; \mathbb{Z}) \cong \mathbb{Z}$.  If $n = 3k+2$, then $H_{d_n}(X^m_n; \mathbb{Z})$ is trivial.
\end{theorem}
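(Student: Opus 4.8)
The plan is to compute the homology of $X^m_n$ in the critical degree $d_n = \lfloor (2n+2)/3 \rfloor$ by leveraging the connectivity bound from Theorem~\ref{mindim} together with the cell counts coming from the recursion \eqref{eqn:rec}. By Theorem~\ref{mindim}, when $n=3k$ or $n=3k+1$ we have $d^{min}_n = d_n$, so $X^m_n$ has no cells in dimensions $0,1,\dots,d_n-1$ (excluding the base $0$-cell). Consequently the reduced chain complex of $X^m_n$ vanishes below degree $d_n$, which forces $H_i(X^m_n;\mathbb{Z})=0$ for $i<d_n$ and identifies $H_{d_n}(X^m_n;\mathbb{Z})$ with the kernel of the boundary map $\partial_{d_n}\colon C^{d_n}_n \to C^{d_n-1}_n = 0$; that is, $H_{d_n}(X^m_n;\mathbb{Z}) \cong \mathbb{Z}^{C^{d_n}_n}$, a free abelian group whose rank equals the number of bottom-dimensional cells. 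The crux is therefore to show that $C^{d_n}_n = 1$ in these two cases.

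First I would establish $C^{d_n}_n = 1$ by induction on $n$, using the recursion $C^d_n = C^{d-2}_{n-3} + C^{d-(m+1)}_{n-4} + C^{d-m}_{n-3}$ from Theorem~\ref{cellrec}. Setting $d = d_n$, I expect exactly one of the three summands to survive and to equal $C^{d^{min}_{n-3}}_{n-3}$, with the other two vanishing for dimensional reasons. Concretely, when $n=3k$ one has $d_n = 2k$ and $n-3 = 3(k-1)$ with $d_{n-3}=2(k-1)=d_n-2$, so the term $C^{d_n-2}_{n-3}$ is precisely the bottom cell count for $\Delta^m_{n-3}$; the term $C^{d_n-m}_{n-3}$ then sits below $d^{min}_{n-3}$ whenever $m>2$ and so vanishes, and $C^{d_n-(m+1)}_{n-4}$ likewise falls below $d^{min}_{n-4}$ (this is exactly the dimension gap noted in the discussion preceding the theorem, valid for $m\geq 4$). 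An entirely parallel computation handles $n=3k+1$, where $d_n=2k+1$, $n-3=3(k-1)+1$, and again only the $C^{d_n-2}_{n-3}$ summand survives. The base cases $n\in\{0,1,2,3\}$ are read directly off Proposition~\ref{cellseed}: for $m\geq 4$ one checks $C^{d_0}_0 = C^0_0 = 1$, $C^{d_1}_1 = C^1_1 = 1$, and $C^{d_3}_3 = C^2_3 = 1$, matching the claim.

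For the case $n=3k+2$, the strategy shifts: here $d^{min}_n = 2\lfloor (n-1)/3\rfloor + m = 2k+m$, which strictly exceeds $d_n = 2k+1$ once $m\geq 4$. Thus by Theorem~\ref{mindim} there are no cells at all in degree $d_n$, so $C^{d_n}_n = 0$ and hence $H_{d_n}(X^m_n;\mathbb{Z})$ is trivial, as claimed; this case requires no separate induction and follows immediately from the connectivity bound.

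The main obstacle will be verifying the vanishing of the two unwanted summands in the recursion cleanly across all residues and carefully tracking the floor functions so that the shifted subscripts $d_n - 2$, $d_n - m$, $d_n - (m+1)$ line up correctly with $d^{min}_{n-3}$ and $d^{min}_{n-4}$; in particular one must confirm that the hypothesis $m\geq 4$ (rather than merely $m\geq 3$) is exactly what guarantees $d_n - m < d^{min}_{n-3}$ and $d_n-(m+1) < d^{min}_{n-4}$, opening the dimension gap that isolates a single surviving term. Once those inequalities are pinned down, the induction closes and the free-homology conclusion follows from the absence of any boundary into degree $d_n$ below it.
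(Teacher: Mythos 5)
Your reduction of the problem to showing $C^{d_n}_n = 1$ is where the argument breaks. You assert that since there are no cells below dimension $d_n$, the group $H_{d_n}(X^m_n;\mathbb{Z})$ is identified with $\ker \partial_{d_n} \cong \mathbb{Z}^{C^{d_n}_n}$. But cellular homology in degree $d_n$ is $\ker\partial_{d_n}/\operatorname{im}\partial_{d_n+1}$, and nothing in your proposal controls the cells of dimension $d_n+1$ or the boundary map coming down from them. A cell complex with a single $d$-cell and a single $(d+1)$-cell attached by a degree-$p$ map has $H_d \cong \mathbb{Z}/p\mathbb{Z}$ (or $0$ when $p=\pm1$), so ``exactly one bottom-dimensional cell'' by itself proves nothing about $H_{d_n}$; recall also that $X^m_n$ is only a CW complex produced by discrete Morse theory, not a simplicial complex, so its incidence numbers can a priori be arbitrary integers. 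The paper closes exactly this hole: its induction proves the two statements $C^{d_n}_n = 1$ \emph{and} $C^{d_n+1}_n = 0$ jointly (the second statement is needed as an induction hypothesis to propagate itself, via the summand $C^{d_{n-3}+1}_{n-3}$ in \eqref{eqn:rec}, so it cannot be tacked on afterwards), and only because there are \emph{no} $(d_n+1)$-cells does $\operatorname{im}\partial_{d_n+1}=0$ and $H_{d_n}\cong\mathbb{Z}$ follow. You gesture at the ``dimension gap'' remark preceding the theorem, but that remark is itself unproven in the text --- its proof is precisely the $C^{d_n+1}_n=0$ half of this induction --- so it cannot be cited to fill the gap.

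The hypothesis $m\geq 4$ is the telltale sign that something is missing. Your induction for $C^{d_n}_n=1$ only ever uses $m\geq 3$: the discarded summands $C^{d_n-m}_{n-3}$ and $C^{d_n-(m+1)}_{n-4}$ lie below the relevant minimum dimensions from Theorem~\ref{mindim} as soon as $m>2$. If your argument were complete as written, it would prove the theorem for $m=3$, where the key vanishing fails: for $m=3$ and $n=3k$, the recursion gives $C^{d_n+1}_n \geq C^{d_n-m+1}_{n-3} = C^{d_{n-3}}_{n-3} = 1$, so there \emph{is} a cell in dimension $d_n+1$ and the homology computation does not go through. Thus $m\geq 4$ is needed exactly for the step your proposal omits. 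Two smaller points: for $n=3k+2$ you have $d_n = \lfloor(6k+6)/3\rfloor = 2k+2$, not $2k+1$ (harmless, since $2k+2 < 2k+m$ still holds for $m\geq 3$, and this case of yours is otherwise correct and matches the paper); and your joint base cases would also need $C^{d_0+1}_0 = C^1_0 = 0$ and $C^{d_1+1}_1 = C^2_1 = 0$ for $m\geq 4$, both of which do follow from Proposition~\ref{cellseed}.
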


\begin{proof}
Recall from Theorem~\ref{mindim} that for $n=3k+2$ and $m \geq 3$, the minimum dimension of critical cells produced by the Comb Algorithm is $2\left\lfloor \frac{n-1}{3} \right\rfloor + m$.
It is easy to verify that $\left\lfloor \frac{2n+2}{3} \right\rfloor = 2k+2 < 2k+m =  2\left\lfloor \frac{n-1}{3} \right\rfloor + m$.  
Therefore, $C^{d_n}_n = 0$ when $n=3k+2$, i.e. $H_{d_n}(X^m_n; \mathbb{Z})$ is trivial.

Now, assume that $n = 3k$.  
We know that $C^{\ell}_n = 0$ for $\ell < d_n$ from our cellular dimension range.  
We argue by induction on $k$ that $C^{d_n}_n = 1$ while $C^{d_n+1}_n = 0$, which proves the claim for $n=3k$.  
Begin by recalling that $C^0_0 = 1$ and $C^1_0 = 0$, which provides a base case.  
Assume that $C^{d_{3\ell}}_{3\ell} = 1$ while $C^{d_{3\ell}+1}_{3\ell} = 0$ for $0 \leq \ell < k$. 
We know that $C^{d_n}_n = C^{d_{3k}}_{3k} = C^{d_{3k}-2}_{3k-3} + C^{d_{3k}-m-1}_{3k-4} + C^{d_{3k}-m}_{3k-3}$ by our cellular recursion.  
Observe that $d_{3k}-2 = 2k-2 = d_{3k-3}$, so $C^{d_{3k}-2}_{3k-3} = 1$ by the induction hypothesis.  
Since $d_{3k}-m-1 < d_{3k}-2 = d_{3k-4}$, it follows that $C^{d_{3k}-m-1}_{3k-4} = 0$.  
Similarly, $d_{3k}-m < d_{3k}-2 = d_{3k-3}$, so $C^{d_{3k}-m}_{3k-3} = 0$.  
Hence, $C^{d_n}_n = 1$.

We also know that $C^{d_n+1}_n = C^{d_{3k}+1}_{3k} = C^{d_{3k}-1}_{3k-3} + C^{d_{3k}-m}_{3k-4} + C^{d_{3k}-m+1}_{3k-3}$ by our cellular recursion.  
Observe that $d_{3k}-1 = 2k-1 = d_{3k-3} + 1$, so $C^{d_{3k}-1}_{3k-3} = 0$ by the induction hypothesis.  
Now, $d_{3k}-m < d_{3k}-2 = d_{3k-4}$ still, which implies $C^{d_{3k}-m}_{3k-4} = 0$.  
Similarly, $d_{3k}-m + 1 < d_{3k}-2 = d_{3k-3}$, so $C^{d_{3k}-m+1}_{3k-3} = 0$.  
Hence, $C^{d_n+1}_n = 0$.

By induction, we conclude that $C^{d_{3k}}_{3k} = 1$ while $C^{d_{3k}+1}_{3k} = 0$ for all $k$, from which the result follows.

Assume that $n = 3k+1$; this argument is similar to the previous case.
We again argue by induction on $k$ that $C^{d_n}_n = 1$ while $C^{d_n+1}_n = 0$.  
We obtain our base case by recalling that $C^1_1 = 1$ and $C^2_1 = 0$ for $m \geq 4$.  
Next, we know that $C^{d_n}_n = C^{d_{3k+1}}_{3k+1} = C^{d_{3k+1}-2}_{3k-2} + C^{d_{3k+1}-m-1}_{3k-3} + C^{d_{3k}-m}_{3k-3}$ by our cellular recursion.  
Observe that $d_{3k+1}-2 = 2k-1 = d_{3k-2} = d_{3(k-1)+1}$, so $C^{d_{3k+1}-2}_{3k-2} = 1$ by the induction hypothesis.  
Now, $d_{3k+1}-m-1 = 2k-m  < 2k-2 = d_{3k-3}$, which implies $C^{d_{3k+1}-m-1}_{3k-3} = 0$.  
Similarly, $d_{3k+1}-m = 2k-m + 1 < 2k-1 = d_{3k-2}$, so $C^{d_{3k+1}-m}_{3k-2} = 0$.  
Hence, $C^{d_n}_n = 1$.

We also know that $C^{d_n+1}_n = C^{d_{3k+1}+1}_{3k+1} = C^{d_{3k+1}-1}_{3k-2} + C^{d_{3k+1}-m}_{3k-3} + C^{d_{3k+1}-m+1}_{3k-2}$ by our cellular recursion.  
Observe that $d_{3k+1}-1 = d_{3k+1} - 2 + 1 = d_{3k-2} + 1$, so $C^{d_{3k+1}-1}_{3k-2} = 0$ by the induction hypothesis.  
Now, $d_{3k+1}-m = 2k - m + 1 <  2k-2 = d_{3k-3}$ still, which implies $C^{d_{3k+1}-m}_{3k-3} = 0$. 
Similarly, $d_{3k+1}-m + 1 < 2k - 1 = d_{3k-2}$, so $C^{d_{3k+1}-m+1}_{3k-2} = 0$. 
Hence, $C^{d_n+1}_n = 0$.

By induction, we conclude that $C^{d_{3k+1}}_{3k+1} = 1$ while $C^{d_{3k+1}+1}_{3k+1} = 0$ for all $k$, from which the result follows.
\end{proof}

For other homology groups, the Comb Algorithm provides less comprehensive results.
For example, when $m=2$, that is, when $X^m_n$ is homotopy equivalent to the matching complex on the $2 \times (n+2)$ grid graph, a direct analysis of the chain space dimensions on a data table yields the following.

\begin{observation}
$X_n^2$ has non-trivial free integral homology in dimension $\left\lfloor \frac{9n+9}{13} \right\rfloor$ for $0 \leq n \leq 99$, except for $n \in \{48,61,74,84,87,90,94,97\}$.
This arises because the rank of the chain space of $X_n^2$ in dimension $\left\lfloor \frac{9n+9}{13} \right\rfloor$ exceeds the sum of the ranks of the chain spaces in dimensions $\left\lfloor \frac{9n+9}{13} \right\rfloor - 1$ and $\left\lfloor \frac{9n+9}{13} \right\rfloor + 1$ for these values of $n$.
Further, $X_n^2$ is a wedge of spheres for $n \in \{0,1,2,3,4,5,7,8,11\}$.
\end{observation}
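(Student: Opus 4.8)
The plan is to treat the two assertions separately: the non-triviality of free homology follows from a rank inequality applied to the tabulated chain ranks, while the wedge-of-spheres statement follows from the dimension bounds already proved.

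For the homology assertion I would begin with the standard linear-algebra fact underlying it. For any chain complex $(C_\bullet,\partial_\bullet)$ of finitely generated free abelian groups, the free rank of $H_d$ equals $\operatorname{rank}(C_d) - \operatorname{rank}(\partial_d) - \operatorname{rank}(\partial_{d+1})$; since $\operatorname{rank}(\partial_d) \le \operatorname{rank}(C_{d-1})$ and $\operatorname{rank}(\partial_{d+1}) \le \operatorname{rank}(C_{d+1})$, we obtain
\[
\operatorname{rank} H_d(X_n^2;\mathbb{Z}) \ \ge \ C_n^d - C_n^{d-1} - C_n^{d+1}.
\]
Hence whenever the right-hand side is positive, $H_d(X_n^2;\mathbb{Z})$ contains a free summand, which is exactly the phenomenon asserted. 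Next I would reduce the required inequality to a finite check. Specializing the recursion of Theorem~\ref{cellrec} to $m=2$ gives $C_n^d = 2C_{n-3}^{d-2} + C_{n-4}^{d-3}$, equivalently $C_n^d = T(n-d+2,\,3d-2n)$ for the Riordan array $T$ of equation~\eqref{riordan}, with the initial data of Proposition~\ref{cellseed}. Using either description I would tabulate the ranks $C_n^d$ for $0 \le n \le 99$ and, with $d = \lfloor (9n+9)/13 \rfloor$, verify the strict inequality $C_n^d > C_n^{d-1} + C_n^{d+1}$ for each $n$. This is the only computational step; it is routine and it isolates precisely the exceptional set $\{48,61,74,84,87,90,94,97\}$ where the inequality fails.

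For the wedge-of-spheres claim I would invoke Theorems~\ref{mindim} and~\ref{maxdim}, which for $m=2$ give $d^{min}_n = \lfloor (2n+2)/3 \rfloor$ and $d^{max}_n = \lfloor (3n+2)/4 \rfloor$ and confine all critical cells of $X_n^2$ to the band $[d^{min}_n, d^{max}_n]$. Comparing these two floor expressions shows that $d^{min}_n = d^{max}_n$ exactly when $n \in \{0,1,2,3,4,5,7,8,11\}$: for $n \ge 14$ the real difference $(n-2)/12 \ge 1$ forces a gap, and $n \in \{6,9,10,12,13\}$ are checked individually. For precisely the listed values all critical cells occupy a single dimension $d$, so the Morse complex of Theorem~\ref{mainmorse} has cells only in dimensions $0$ and $d$, its boundary maps vanish for dimensional reasons, and such a CW complex is a wedge of $d$-spheres (read as a wedge of $S^0$'s when $d=0$ and a wedge of circles when $d=1$).

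The main obstacle is conceptual and lies in the wedge statement rather than the homology computation: free homology together with simple connectivity does not by itself force a wedge of spheres---$\mathbb{CP}^2$ is the standard counterexample---so one cannot argue from homology groups alone. The decisive input is instead the concentration of all critical cells in a single dimension, which makes every attaching map null-homotopic; establishing that this concentration occurs exactly on the finite set above, and correctly interpreting the degenerate cases $d \in \{0,1\}$, are the only points that require care.
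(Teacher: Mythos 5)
Your proposal is correct and takes essentially the same route as the paper: the paper justifies the homology claim by exactly your rank inequality $\operatorname{rank} H_d(X_n^2;\mathbb{Z}) \geq C^d_n - C^{d-1}_n - C^{d+1}_n$ applied to chain-rank tables built from the $m=2$ recursion $C^d_n = 2C^{d-2}_{n-3} + C^{d-3}_{n-4}$, which is precisely your computational step. The only (welcome) refinement is in the wedge-of-spheres part, where the paper simply reads concentration of cells off its data table, while you derive the same concentration in closed form from Theorems~\ref{mindim} and~\ref{maxdim} by checking that $d^{min}_n = d^{max}_n$ exactly for $n \in \{0,1,2,3,4,5,7,8,11\}$ and then correctly noting that a Morse complex whose cells, apart from the base $0$-cell, all lie in one dimension has constant attaching maps and is therefore a wedge of spheres.
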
  
As $n$ grows larger, the data suggests that the rank of this particular chain space ceases to ``typically'' exceed the sum of the ranks of the neighboring chain spaces.
This suggests that the behavior of $\Ind(\Delta_n^m)$ for ``small'' values of $n$, including many values of $n$ for which by-hand computations appear prohibitive, is not indicative of the general behavior of these complexes.

Thus, the topology of $\Ind(\Delta_n^m)$ remains generally mysterious.
It would be of interest to investigate the following two questions.
\begin{enumerate}
\item Does torsion occur in the homology of  $\Ind(\Delta_n^m)$?
If so, for which $p$ does $\mathbb{Z}/p\mathbb{Z}$ appear as a summand?
\item There is a natural action of the symmetric group $\mathfrak{S}_m$ on $\Ind(\Delta_n^m)$.
What is the $\mathfrak{S}_m$-module structure of $H_\ast(\Ind(\Delta_n^m);\mathbb{C})$?
\end{enumerate}

%


\bibliographystyle{plain}
\bibliography{Braun}

\begin{thebibliography}{10}

\bibitem{OEIS}
The On-Line Encyclopedia of Integer Sequences, published electronically at
  https://oeis.org.

\bibitem{AdamaszekSpecial}
Micha{\l} Adamaszek.
\newblock Special cycles in independence complexes and superfrustration in some
  lattices.
\newblock {\em Topology Appl.}, 160(7):943--950, 2013.

\bibitem{BarmakStar}
Jonathan~Ariel Barmak.
\newblock Star clusters in independence complexes of graphs.
\newblock {\em Adv. Math.}, 241:33--57, 2013.

\bibitem{Bouc}
S.~Bouc.
\newblock Homologie de certains ensembles de {$2$}-sous-groupes des groupes
  sym\'etriques.
\newblock {\em J. Algebra}, 150(1):158--186, 1992.

\bibitem{LinussonGridGraphs}
Mireille Bousquet-M{\'e}lou, Svante Linusson, and Eran Nevo.
\newblock On the independence complex of square grids.
\newblock {\em J. Algebraic Combin.}, 27(4):423--450, 2008.

\bibitem{BraunIndComplexKneser}
Benjamin Braun.
\newblock Independence complexes of stable {K}neser graphs.
\newblock {\em Electron. J. Combin.}, 18(1):P118, (electronic), 2011.

\bibitem{WachsDongLaplacian}
Xun Dong and Michelle~L. Wachs.
\newblock Combinatorial {L}aplacian of the matching complex.
\newblock {\em Electron. J. Combin.}, 9(1):Research Paper 17, 11 pp.
  (electronic), 2002.

\bibitem{EarlEtAl}
Jonathan Earl, Kevin~N. Vander~Meulen, and Adam Van~Tuyl.
\newblock Independence {C}omplexes of {W}ell-{C}overed {C}irculant {G}raphs.
\newblock {\em Exp. Math.}, 25(4):441--451, 2016.

\bibitem{EhrenborgHetyei}
Richard Ehrenborg and G{\'a}bor Hetyei.
\newblock The topology of the independence complex.
\newblock {\em European J. Combin.}, 27(6):906--923, 2006.

\bibitem{FormanMorseTheory}
Robin Forman.
\newblock Morse theory for cell complexes.
\newblock {\em Adv. Math.}, 134(1):90--145, 1998.

\bibitem{JonssonMatchingGrids}
Jakob Jonsson.
\newblock Matching complexes on grids.
\newblock unpublished manuscript available at
  http://www.math.kth.se/$\sim$jakobj/doc/thesis/grid.pdf.

\bibitem{JonssonBook}
Jakob Jonsson.
\newblock {\em Simplicial complexes of graphs}, volume 1928 of {\em Lecture
  Notes in Mathematics}.
\newblock Springer-Verlag, Berlin, 2008.

\bibitem{JonssonMoreTorsion}
Jakob Jonsson.
\newblock More torsion in the homology of the matching complex.
\newblock {\em Experiment. Math.}, 19(3):363--383, 2010.

\bibitem{KozlovBook}
Dmitry Kozlov.
\newblock {\em Combinatorial algebraic topology}, volume~21 of {\em Algorithms
  and Computation in Mathematics}.
\newblock Springer, Berlin, 2008.

\bibitem{KozlovHopf}
Dmitry~N. Kozlov.
\newblock Discrete {M}orse theory and {H}opf bundles.
\newblock {\em Pacific J. Math.}, 249(2):371--376, 2011.

\bibitem{ShareshianWachsTopHomology}
John Shareshian and Michelle~L. Wachs.
\newblock Top homology of hypergraph matching complexes, {$p$}-cycle complexes
  and {Q}uillen complexes of symmetric groups.
\newblock {\em J. Algebra}, 322(7):2253--2271, 2009.

\bibitem{VrecicaZivaljevicCycleFree}
Sini{\v{s}}a~T. Vre{\'c}ica and Rade~T. {\v{Z}}ivaljevi{\'c}.
\newblock Cycle-free chessboard complexes and symmetric homology of algebras.
\newblock {\em European J. Combin.}, 30(2):542--554, 2009.

\bibitem{WachsTopologyMatching}
Michelle~L. Wachs.
\newblock Topology of matching, chessboard, and general bounded degree graph
  complexes.
\newblock {\em Algebra Universalis}, 49(4):345--385, 2003.
\newblock Dedicated to the memory of Gian-Carlo Rota.

\end{thebibliography}

\end{document}